\newtheorem{theorem}{Theorem}[section]
\newtheorem{lemma}[theorem]{Lemma}
\newtheorem{corollary}[theorem]{Corollary}
\newtheorem{claim}[theorem]{Claim}
\newtheorem{problem}[theorem]{Problem}
\newtheorem{question}[theorem]{Question}
\newcommand\abs[1]{\lvert #1\rvert}
\begin{document}
\title{$(1, k)$-coloring of graphs with girth at least $5$ on a surface}
\author{
Hojin Choi\thanks{
Department of Mathematical Sciences, Korea Advanced Institute of Science and Technology, Daejeon, Republic of Korea.
\texttt{\{hojinchoi,ilkyoo,jjisu,gwsuh91\}@kaist.ac.kr}\newline
All authors are supported by Basic Science Research
  Program through the National Research Foundation of Korea (NRF)
  funded by the Ministry of Science, ICT \& Future Planning
  (2011-0011653).
}
\and
Ilkyoo Choi$^*$\thanks{Corresponding author. }
\and
Jisu Jeong$^*$
\and
Geewon Suh$^*$
}
\date\today
\maketitle
\begin{abstract}
A graph is $(d_1, \ldots, d_r)$-colorable if its vertex set can be partitioned into $r$ sets $V_1, \ldots, V_r$ so that the maximum degree of the graph induced by $V_i$ is at most $d_i$ for each $i\in \{1, \ldots, r\}$.
For a given pair $(g, d_1)$, the question of determining the minimum $d_2=d_2(g; d_1)$ such that planar graphs with girth at least $g$ are $(d_1, d_2)$-colorable has attracted much interest. 
The finiteness of $d_2(g; d_1)$ was known for all cases except when $(g, d_1)=(5, 1)$.
Montassier and Ochem explicitly asked if $d_2(5; 1)$ is finite. 
We answer this question in the affirmative with $d_2(5; 1)\leq 10$; namely, we prove that all planar graphs with girth at least $5$ are $(1, 10)$-colorable. 
Moreover, our proof extends to the statement that for any surface $S$ of Euler genus $\gamma$, there exists a $K=K(\gamma)$ where graphs with girth at least $5$ that are embeddable on $S$ are $(1, K)$-colorable. 
On the other hand, there is no finite $k$ where planar graphs (and thus embeddable on any surface) with girth at least $5$ are $(0, k)$-colorable.

\end{abstract}

\section{Introduction}

Only finite, simple graphs are considered. 
Given a graph $G$, let $V(G)$ and $E(G)$ denote the vertex set and the edge set of $G$, respectively. 
A {\it neighbor} of a vertex $v$ is a vertex adjacent to $v$, and let $N(v)$ denote the set of neighbors of $v$.
The {\it degree} of a vertex $v$, denoted $d(v)$, is $|N(v)|$.
The {\it degree} of a face $f$, denoted $d(f)$, is the length of a shortest boundary walk of $f$. 
A {\it $k$-vertex}, {\it $k^+$-vertex}, {\it $k^-$-vertex} is a vertex of degree $k$, at least $k$, at most $k$, respectively. 
A {\it $k$-face} is a face of degree $k$. 

Given a graph $G$, the chromatic number is the minimum $k$ such that the vertex set $V(G)$ can be partitioned into $k$ parts so that each part induces a graph with maximum degree at most $0$. 
Maybe we can allow some non-zero maximum degree in each color class and obtain a partition with fewer parts than the chromatic number; this notion is known as {\it improper coloring}, and has been widely studied recently.
To be precise, a graph is {\it $(d_1, \ldots, d_r)$-colorable} if its vertex set can be partitioned into $r$ sets $V_1, \ldots, V_r$ so that the maximum degree of the graph induced by $V_i$ is at most $d_i$ for each $i\in \{1, \ldots, r\}$;
in other words, there exists a function $\varphi:V(G)\rightarrow\{1, \ldots, r\}$ where the graph induced by vertices of color $i$ has maximum degree at most $d_i$ for $i\in\{1, \ldots, r\}$.

There are many papers that study $(d_1, \ldots, d_r)$-colorings of sparse graphs resulting in corollaries regarding planar graphs, often with restrictions on the girth. 
The well-known Four Color Theorem~\cite{1977ApHa,1977ApHaKo} is exactly the statement that planar graphs are $(0, 0, 0, 0)$-colorable. 
Cowen, Cowen, and Woodall~\cite{1986CoCoWo} proved that planar graphs are $(2, 2, 2)$-colorable, and Eaton and Hull~\cite{1999EaHu} and \v Skrekovski~\cite{1999Sk} proved that this is sharp by exhibiting a non-$(1, k, k)$-colorable planar graph for each $k$. 
Thus, the problem of improper coloring planar graphs with at least three parts is completely solved.

Naturally, the next line of research is to consider improper colorings of planar graphs with two parts.
The following two questions are attracting much interest. 

\begin{problem}\label{problem-girth}
Given a pair $(d_1, d_2)$, determine the minimum $g=g(d_1,d_2)$ such that every planar graph with girth $g$ is $(d_1, d_2)$-colorable. 
\end{problem}

\begin{problem}\label{problem-d2}
Given a pair $(g, d_1)$, determine the minimum $d_2=d_2(g; d_1)$ such that every planar graph with girth $g$ is $(d_1, d_2)$-colorable. 
\end{problem}

Regarding Problem~\ref{problem-girth}, the special case when $d_1=d_2$ was first considered by Cowen, Cowen, and Woodall~\cite{1986CoCoWo}, who constructed a planar graph that is not $(d, d)$-colorable for each $d$.
This shows that $g(d, d)\geq 4$ for all $d$, and instigated the girth constraint for future research in this area.
\v Skrekovski~\cite{1999Sk,2000Sk} continued the study and obtained some bounds on $g(d, d)$, which were improved by Havet and Sereni~\cite{2006HaSe} and Borodin, Kostochka, and Yancey~\cite{2013BoKoYa}; the current best known bounds are $6\leq g(1, 1)\leq 7$ and $5\leq g(3, 3)\leq g(2, 2)\leq 6$ and $g(d, d)=5$ for $d\geq 4$. 
Note that since $g(d_1+1, d_2+1)\leq g(d_1, d_2+1)\leq g(d_1, d_2)$, we know that $g(d_1, d_2)=5$ whenever $d_1, d_2\geq 4$. 

Values of $g(d_1, d_2)$ that are determined when $\min\{d_1, d_2\}\leq 3$ are $g(0, d_2)=7$ when $d_2\geq 4$ by Borodin and Kostochka~\cite{2014BoKo} and Borodin, Ivanova, Montassier, Ochem, and Raspaud~\cite{2010BoIvMoOcRa}, $g(2, d_2)=5$ when $d_2\geq 6$ by Havet and Sereni~\cite{2006HaSe} and \v Skrekovski~\cite{2000Sk}, and $g(3, d_2)=5$ when $d_2\geq 5$, proved by Choi and Raspaud~\cite{unpub_ChRa} and \v Skrekovski~\cite{2000Sk}.

To our knowledge, the exact value of $g(1, d_2)$ were not known for any value of $d_2$ before this paper. 
Our main result (Theorem~\ref{thm-main}) determines infinitely many values of $g(1, d_2)$; namely, our main result implies that $g(1, d_2)=5$ when $d_2\geq 10$.
These facts are summerized in the following theorem.

\begin{theorem}[\cite{2014BoKo,2010BoIvMoOcRa,2006HaSe,2000Sk,unpub_ChRa}]
If $g(d_1,d_2)$ is the minimum $g$ where every planar graph with girth $g$ is $(d_1, d_2)$-colorable, then 
\begin{itemize}
\item $g(0, d_2)=7$ for $d_2\geq 4$,

\item $g(1, d_2)=5$ for $d_2\geq 10$,

\item $g(2, d_2)=5$ for $d_2\geq 6$,

\item $g(3, d_2)=5$ for $d_2\geq 5$,

\item $g(d_1, d_2)=5$ for $d_1,d_2\geq 4.$
\end{itemize}
\end{theorem}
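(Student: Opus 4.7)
The approach is to assemble each of the five bullets from the cited literature together with the main theorem of this paper. For each bullet $g(d_1, d_2) = g_0$, I establish an upper bound, namely that every planar graph with girth at least $g_0$ is $(d_1, d_2)$-colorable, and a lower bound, namely that some planar graph with girth at least $g_0 - 1$ fails to be $(d_1, d_2)$-colorable. The monotonicity $g(d_1, d_2 + 1) \leq g(d_1, d_2)$, already noted in the discussion of Problem~\ref{problem-girth}, lets me extend any single $d_2$ to the whole stated range.

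The first, third, fourth, and fifth bullets are essentially bookkeeping once the cited works are combined: Borodin--Kostochka and Borodin--Ivanova--Montassier--Ochem--Raspaud together pin down $g(0, d_2) = 7$ for $d_2 \geq 4$; Havet--Sereni and \v Skrekovski together pin down $g(2, d_2) = 5$ for $d_2 \geq 6$; Choi--Raspaud and \v Skrekovski together pin down $g(3, d_2) = 5$ for $d_2 \geq 5$; and \v Skrekovski's bound $g(d, d) = 5$ for $d \geq 4$ combined with monotonicity handles asymmetric pairs with $d_1, d_2 \geq 4$. For each of these, the upper bound is the positive result in the corresponding paper, and the lower bound is supplied by the accompanying non-$(d_1, d_2)$-colorable girth-$(g_0-1)$ planar construction there.

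The novel bullet is $g(1, d_2) = 5$ for $d_2 \geq 10$. The upper bound $g(1, d_2) \leq 5$ reduces immediately to the main theorem of this paper, Theorem~\ref{thm-main}, which asserts that every planar graph with girth at least $5$ is $(1, 10)$-colorable; since a $(1, 10)$-coloring is in particular a $(1, d_2)$-coloring whenever $d_2 \geq 10$, this gives the required upper bound. The lower bound $g(1, d_2) \geq 5$ demands, for each fixed $d_2$, a planar graph of girth (at least) $4$ that admits no $(1, d_2)$-coloring; such graphs are standard in the improper-coloring literature and can be obtained by gluing together enough 4-face gadgets at a common high-degree vertex so as to force a contradiction in any attempted partition.

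The main obstacle is not the assembly above but the proof of Theorem~\ref{thm-main} itself, which supplies the upper bound in the new bullet and is the substance of the paper; the compiled theorem is essentially a corollary once that result and the cited constructions are in hand.
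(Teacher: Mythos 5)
Your proposal is correct and takes essentially the same route as the paper: the theorem is presented there as a compilation of the cited results together with Theorem~\ref{thm-main} and the monotonicity $g(d_1,d_2+1)\leq g(d_1,d_2)$, exactly as you assemble it. The one point worth noting is that the girth-$4$ lower bounds you sketch informally (one non-$(1,d_2)$-colorable girth-$4$ planar graph for \emph{each} $d_2$, since monotonicity does not propagate lower bounds) are precisely the cited non-finiteness of $d_2(4;d_1)$ due to Montassier and Ochem, so no ad hoc gadget construction is needed.
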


None of the thresholds on $d_2$ in the previous theorem are known to be tight. 
There has been substantial effort in trying to find the exact value of $g(0, 1)$ by various authors~\cite{2007GlZa,2009BoIv,2011BoKo,2013EsMoOcPi,unpub_KiKoZh}, and the current best bound is $10\leq g(0, 1)\leq 11$ by Kim, Kostochka, and Zhu~\cite{unpub_KiKoZh} and Esperet, Montassier, Ochem, and Pinlou~\cite{2013EsMoOcPi}.
Interestingly, it is known that $g(0, 2)=8$, proved by Montassier and Ochem~\cite{unpub_MoOc} and Borodin and Kostochka~\cite{2014BoKo}.

\bigskip

Regarding Problem~\ref{problem-d2}, we know that $d_2(g; d_1)$ is not finite when either $g\in\{3, 4\}$ by Montassier and Ochem~\cite{unpub_MoOc} or $d_1=0$ and $g\leq 6$ by Borodin, Ivanova, Montassier, Ochem, and Raspaud~\cite{2010BoIvMoOcRa}. 
Various authors~\cite{2010BoIvMoOcRa,2014BoKo,2013BoKoYa,2012BoIvMoRa,2006HaSe,2011BoIvMoRa,unpub_MoOc} conducted research on trying to find the exact value of $d_2(g; d_1)$ for various pairs $(g, d_1)$.
See Table~\ref{table} for the current best known bounds.
Improving any value in the table would be a noteworthy result.

\begin{table}[h]
\begin{center}	
\begin{tabular}{c||c|c|c|c|c}
\mbox{girth} & $(0,d_2)$ & $(1,d_2)$ & $(2,d_2)$ & $(3,d_2)$ & $(4,d_2)$ \\
\hline
3 & $\times$ & $\times$ & $\times$ & $\times$ & $\times$ \\
4 & $\times$ & $\times$ & $\times$ & $\times$ & $\times$ \\
5 & $\times$ & $(1,10)$  & $(2,6)$~\cite{2014BoKo}     &  $(3,5)$~\cite{unpub_ChRa}  & $(4, 4)$~\cite{2000Sk}\\
6 & $\times$~\cite{2010BoIvMoOcRa}& $(1,4)$~\cite{2014BoKo}  & $(2, 2)$~\cite{2006HaSe}    & 			&  		\\
7 & $(0,4)$~\cite{2014BoKo}	& $(1, 1)$~\cite{2013BoKoYa}  & 		      & 			&  		\\
8 & $(0,2)$~\cite{2014BoKo}	& 		    & 		      & 			&  		\\
11 & $(0,1)$~\cite{unpub_KiKoZh}	& 		    & 		      & 			&  		
\end{tabular}
\caption{Table of $d_2(g; d_1)$}
\label{table}
\end{center}
\end{table}

The finiteness of $d_2(g; d_1)$ was known for all pairs $(g, d_1)$, except when $(g, d_1)=(5, 1)$. 
This case was an open question that was explicitly asked in Montassier and Ochem~\cite{unpub_MoOc} and was also mentioned in Choi and Raspaud~\cite{unpub_ChRa}.

\begin{question}[\cite{unpub_MoOc,unpub_ChRa}]\label{question}
Does there exist a finite $k$ where planar graphs with girth at least $5$ are $(1, k)$-colorable?
\end{question}

We answer Question~\ref{question} in the affirmative by proving that $d_2(5; 1)\leq 10$. 
This finishes the long journey of characterizing all pairs $(g, d_1)$ where $d_2(g; d_1)$ is finite. 
Moreover, our proof easily extends to the statement that for any surface $S$ of Euler genus $\gamma$, there exists a $K=K(\gamma)$ where graphs with girth at least $5$ that are embeddable on $S$ are $(1, K)$-colorable. 
This is best possible in the sense that it was already known that there is no finite $k$ where planar graphs with girth at least $5$ are $(0, k)$-colorable~\cite{2010BoIvMoOcRa}.
The following is the precise statement of our main results.

\begin{theorem}\label{thm-main}
Projective planar graphs with girth at least $5$ are $(1, 10)$-colorable. 
\end{theorem}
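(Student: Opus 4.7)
The plan is a standard extremal-minimality-plus-discharging argument. Suppose for contradiction that some projective planar graph $G$ with girth at least $5$ is not $(1,10)$-colorable; choose such a $G$ with $|V(G)|+|E(G)|$ minimum, and fix a $2$-cell embedding in the projective plane.

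First, I will extract structural information about $G$ by showing that various local configurations are reducible. For each forbidden configuration, I delete or contract a small subgraph $H\subseteq G$, use minimality to obtain a $(1,10)$-coloring of the smaller graph, and extend it to $G$, yielding a contradiction. The easiest reductions show that $G$ has no vertex of degree at most $1$, and more generally that a $2$-vertex cannot be surrounded by vertices of too small degree: for a $2$-vertex $v$ whose neighbors both have degree at most $10$, color $G-v$ by minimality and assign $v$ color $2$, which is always admissible because each neighbor of $v$ then has at most nine other neighbors in color $2$. The decisive reductions will concern short chains of $2$-vertices and $3$-vertices and their interaction with the $5$-faces through which they are threaded; these extension arguments exploit both the defect $d_1=1$ (allowing one monochromatic edge per vertex in color $1$) and the generous defect $d_2=10$, and they may require Kempe-style swaps on the color classes of the pre-coloring of a smaller graph.

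Next I will run a discharging argument. Assign initial charge $\mu(v)=d(v)-4$ to each vertex $v$ and $\mu(f)=d(f)-4$ to each face $f$. Since $|V(G)|-|E(G)|+|F(G)|=1$ for a $2$-cell embedding in the projective plane, we have $\sum_{x}\mu(x)=-4(|V|-|E|+|F|)=-4$; the planar analogue gives $-8$ and is handled by exactly the same rules. Because the girth is at least $5$, every face has $\mu(f)\ge 1$, and only $2$-vertices ($\mu=-2$) and $3$-vertices ($\mu=-1$) start with negative charge. The redistribution rules will move charge from $5^{+}$-faces and from high-degree vertices onto nearby $2$- and $3$-vertices. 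The structural restrictions from the previous phase must guarantee that every face and every high-degree vertex has enough local slack to cover the deficient vertices around it, so that the final charge $\mu^{*}(x)$ is nonnegative for every vertex and face; this contradicts $\sum_x\mu^{*}(x)=\sum_x\mu(x)=-4$.

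The main obstacle is the reducibility analysis around $2$-vertices. Since a $2$-vertex lies on exactly two faces, the face-charge directly available to it is only about $2\cdot(d(f)-4)$, so the reductions must forbid configurations in which several $2$- or $3$-vertices crowd the same short face. This is precisely where the interplay between the tight constraint $d_1=1$, which drastically limits how color $1$ can be rerouted, and the loose threshold $d_2=10$, which absorbs many local conflicts, becomes delicate, and I expect to need a fairly long list of multi-vertex reducible configurations analysed by careful case work on how color $1$ can propagate along short $2$-vertex paths. The projective plane introduces no additional structural difficulty: it only improves the total-charge budget from $-8$ to $-4$, so any scheme that succeeds in the plane also succeeds on the projective plane.
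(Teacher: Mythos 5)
Your overall strategy---minimal counterexample, local reducibility, then discharging on a projective-plane embedding---is exactly the paper's, and your easy reductions (minimum degree $2$, a $2$-vertex with two low-degree neighbors is reducible, the projective plane only changing the charge budget from the planar value) are all correct. But what you have written is a roadmap, not a proof: every place where the actual difficulty lives is deferred to ``a fairly long list of multi-vertex reducible configurations'' and rules that ``must guarantee'' nonnegative final charge. The paper's content is precisely that list and that guarantee, and neither is routine. Concretely, the proof needs: (a) the sharp threshold that every $11^-$-vertex has a $12^+$-neighbor (your sketch only handles neighbors of degree at most $10$; degree $11$ requires a recoloring step using that a $10$-saturated vertex of degree at most $11$ can be switched to color $1$); (b) a mechanism for moving charge \emph{between faces}, because a $5$-face carrying two $2$-vertices and only one high vertex cannot be paid for by its own incident vertices---the paper's ``sponsor'' rules (R6)--(R8) push charge across an edge from a neighboring face whose relevant vertices are high, and this forces the taxonomy of special, $X_1$-, $X_2$-, $Y_1$-, $Y_2$-faces; and (c) most importantly, a bound on how many expensive (``bad''/``terrible'') faces a high vertex can support. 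A $12$-vertex has exactly enough charge to give $3/2$ to each of its $12$ incident faces with zero slack, so any face demanding $2$ must be ruled out at degree-$12$ vertices. The paper's Lemma~\ref{terrible-faces-num} does this by a delicate \emph{global} coloring argument that propagates color constraints around all terrible faces at a single high vertex to overcount its neighbors; nothing in your plan anticipates that a non-local reducibility argument of this kind is needed, and without it the discharging cannot close.

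A secondary point: your normalization $\mu(v)=d(v)-4$, $\mu(f)=d(f)-4$ is workable in principle, but it makes $3$-vertices deficient as well, which adds cases; the paper's choice $\mu(v)=2d(v)-6$, $\mu(f)=d(f)-6$ leaves only $2$-vertices and $5$-faces negative and concentrates the entire verification on $5$- and $6$-faces. Either way, the verification that every face and every high vertex ends nonnegative is a multi-page case analysis that your proposal does not attempt, so the argument as given has a genuine gap rather than being a complete alternative proof.
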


\begin{corollary}\label{cor-planar}
Planar graphs with girth at least $5$ are $(1, 10)$-colorable. 
\end{corollary}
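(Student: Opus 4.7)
Corollary~\ref{cor-planar} is immediate from Theorem~\ref{thm-main}, since every planar graph embeds in the projective plane; consequently the real content is Theorem~\ref{thm-main}, and my plan for it is the standard minimum-counterexample plus discharging method that underlies essentially every entry in Table~\ref{table}. Let $G$ be a counterexample with $|V(G)| + |E(G)|$ minimum, embedded on the projective plane.

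The first phase is to build a list of forbidden local configurations. For each configuration, I would argue that if $G$ contained it, deleting or contracting a carefully chosen substructure $H$ yields a smaller graph $G'$ of girth at least $5$; then minimality supplies a $(1, 10)$-coloring $\varphi'$ of $G'$, and I would show $\varphi'$ extends to a $(1, 10)$-coloring of $G$, contradicting the choice of $G$. Typical candidates are $2$-vertices with ``light'' neighborhoods, $3$-vertices adjacent to too many small-degree vertices, and small clusters of low-degree vertices sitting on short faces. The flexibility of the coloring --- each color-$1$ vertex tolerates a single color-$1$ neighbor and each color-$2$ vertex tolerates up to $10$ color-$2$ neighbors --- is exactly what makes these extension arguments succeed when the configuration is sufficiently isolated.

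The second phase is Euler's formula for the projective plane, $\sum_{v}(d(v) - 4) + \sum_{f}(d(f) - 4) = -4$. Because the girth is at least $5$, every face starts with charge at least $1$, while only $2$-vertices and $3$-vertices contribute negative charge (namely $-2$ and $-1$). I would then design discharging rules sending charge from large faces, and from high-degree vertices, to these deficient vertices, calibrated so that the absence of the configurations forbidden in the first phase is precisely what is needed to certify that every face and every vertex ends with non-negative charge. Summing contradicts the negative global total and finishes the proof.

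The principal obstacle is calibrating reducibility and discharging together. With $d_1 = 1$, extending a partial coloring is subtler than in the $(0, k)$ setting because one must simultaneously respect a matching constraint inside the color-$1$ class and the degree-$10$ constraint inside the color-$2$ class; this typically forces a larger and more intricate catalogue of reducible configurations than for either endpoint. I expect the hardest step to be the reducibility lemmas for $3$-vertex configurations needed to absorb the charge deficit left after the girth hypothesis has rid us of $3$- and $4$-faces, and the bound $d_2 = 10$ in the statement will presumably reflect exactly how much of the color-$2$ degree tolerance those extension arguments consume.
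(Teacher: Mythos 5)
Your first sentence is exactly the paper's argument: the corollary follows immediately from Theorem~\ref{thm-main} because every planar graph embeds in the projective plane. The remainder of your proposal is a (reasonable, if generic) outline of how Theorem~\ref{thm-main} itself is proved, which is beyond what this statement requires; for the corollary itself your derivation is correct and identical to the paper's.
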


\begin{theorem}\label{thm-gen}
Given a surface $S$ of Euler genus $\gamma$, every graph with girth at least $5$ that is embeddable on $S$ is $\big(1, K(\gamma)\big)$-colorable where $K(\gamma)=\max\{10, 4\gamma+3\}$.
\end{theorem}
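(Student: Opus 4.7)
The plan is to mirror the proof of Theorem~\ref{thm-main}, extending its discharging analysis to account for the extra charge arising from surfaces of higher Euler genus. I would take a minimum counterexample $G$ embedded on a surface $S$ of Euler genus $\gamma$. Since every reducible configuration in the proof of Theorem~\ref{thm-main} is established by a local recoloring argument that does not reference the embedding surface, minimality of $G$ forces it to avoid all such configurations and to inherit the same local structural properties as a minimum projective planar counterexample.

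I would then rerun the same discharging rules with initial charges $\mu(v)=d(v)-4$ and $\mu(f)=d(f)-4$. By Euler's formula $|V|-|E|+|F|\geq 2-\gamma$ together with the girth-$5$ bound $2|E|\geq 5|F|$,
\[
\sum_{v\in V(G)}\mu(v)+\sum_{f\in F(G)}\mu(f)\;=\;4|E|-4(|V|+|F|)\;\leq\;4\gamma-8.
\]
Every discharging rule is local, so each vertex and face still ends with non-negative final charge. For $\gamma\leq 1$ this contradicts the right-hand side being negative, but for $\gamma\geq 2$ a positive surplus of $4\gamma-8$ persists, and the bulk of the work is to absorb it.

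To absorb this surplus, I would exploit the enlarged threshold $K(\gamma)=4\gamma+3$. With $K$ larger than $10$, vertices of moderate degree enjoy strictly stronger local extendability: more neighbors may share color~$2$ before violating the cap, so partial $(1,K)$-colorings extend under weaker structural conditions. I expect this to yield augmented discharging rules that send strictly more charge away from each $K^+$-vertex, with the excess summing to strictly more than $4\gamma-8$ via a double count using the edge bound $|E|\leq \tfrac{5}{3}(|V|+\gamma-2)$. The matching coefficient $4$ in $K(\gamma)=4\gamma+3$ arises naturally from the same Euler computation that produces the $4\gamma-8$ surplus, and the $+3$ accommodates the baseline projective planar case where $K=10$.

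The hard part will be calibrating the augmented discharging rules so they mesh with those used in Theorem~\ref{thm-main}: a new rule must not deprive a receiver on which the original rules relied, nor introduce a configuration that is not reducible. Establishing that the extra charge discharged from $K^+$-vertices scales linearly in $\gamma$, rather than stalling at a constant as a naive analysis might suggest, is the key technical hurdle; everything else is a faithful re-execution of the projective planar argument with the Euler formula replaced by its genus-$\gamma$ counterpart.
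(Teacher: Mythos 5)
There is a genuine gap in how you handle the surplus for $\gamma\geq 2$, and your proposed mechanism points in the wrong direction. Once every vertex and face has nonnegative final charge and the total charge is the positive number $4\gamma-8$ (or $6\gamma-12$ in the paper's normalization), there is no contradiction yet; since discharging preserves the total, the only way to finish is to exhibit elements whose final charges are \emph{strictly positive} and sum to \emph{more} than the initial total. Your plan to ``send strictly more charge away from each $K^+$-vertex'' makes the high vertices' final charges smaller and merely relocates charge to recipients you have not counted, and the ``double count using the edge bound'' is never developed into an inequality. The paper does the opposite: it keeps the discharging rules unchanged and observes that a high vertex now has degree at least $K(\gamma)+2=4\gamma+5$, so after sending at most $2$ to each of at most $\min\{\lfloor d(v)/3\rfloor, d(v)-K(\gamma)-2\}$ bad faces and $\tfrac32$ to every other incident face, it \emph{retains} final charge at least $\tfrac{K(\gamma)}{2}-5=2\gamma-3.5$. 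The second missing ingredient is a lower bound on how many such vertices exist: the paper proves (Lemma~\ref{vx-high-general}, using the girth-$5$ hypothesis and the generalized versions of Lemmas~\ref{vx-degree} and~\ref{no-22}) that a minimal counterexample has at least \emph{three} $(K(\gamma)+2)^+$-vertices, whence the final charge sum is at least $3(2\gamma-3.5)=6\gamma-10.5>6\gamma-12$, the desired contradiction. This count of three is exactly what makes the coefficient $4$ in $K(\gamma)=4\gamma+3$ work; without it the argument fails for $\gamma\geq 3$.

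A secondary but concrete problem: you cannot ``rerun the same discharging rules'' with $\mu(v)=d(v)-4$ and $\mu(f)=d(f)-4$. Under the paper's rules a $3$-vertex neither sends nor receives charge, so with your normalization it would finish with charge $-1$. The paper's charges $\mu(v)=2d(v)-6$, $\mu(f)=d(f)-6$ (total $6\gamma-12$) are what the rules are calibrated to, and the whole rule set would need rebalancing under your choice. Your observation that the reducibility arguments are local and survive the change of surface, with thresholds shifted from $10$ and $12$ to $K(\gamma)$ and $K(\gamma)+2$, is correct and matches the paper.
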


In Section~\ref{section-lemmas}, we will reveal some structure of minimum counterexamples to Theorem~\ref{thm-main}. 
We use discharging, and the discharging rules are laid out in Section~\ref{section-discharging}. 
Finally, we finish the proof of Theorem~\ref{thm-main} and explain how the proof extends to the proof of Theorem~\ref{thm-gen} in Section~\ref{section-proof}.
The ideas of the lemmas used in the proof of Theorem~\ref{thm-gen} are in Section~\ref{section-lemmas}, yet, in order to improve the readability of the paper, we will not explicitly rewrite all the lemmas for the case $\gamma\geq 2$ as they do not add more value.

From now on, assume a graph $G$ is a counterexample to Theorem~\ref{thm-main} with the minimum number of vertices, and fix some embedding of $G$.
Note that $G$ is connected and the minimum degree of $G$ is at least $2$.
We will also assume that for a (partial) $(d_1, d_2)$-coloring $\varphi$ of $G$, the two colors will be $d_1, d_2$ and the graph induced by the color $i$ has maximum degree at most $i$ for $i\in\{d_1, d_2\}$.

In the figures throughout this paper, the white vertices do not have incident edges besides the ones drawn, and the black vertices may have other incident edges. 

\section{Structural Lemmas of $G$}\label{section-lemmas}

In this section, we will reveal some structural aspects of $G$.
A $12^+$-vertex is {\it high} and a $4^-$-vertex is {\it low}. 
A vertex of degree $6$ to $11$ is a {\it medium} vertex. 
Given a (partial) coloring $\varphi:V(G)\rightarrow\{1, 10\}$ of $G$, a vertex $v$ is {\it $i$-saturated} if $\varphi(v)=i$ and $v$ is adjacent to $i$ neighbors colored $i$.
Note that by definition, an $i$-saturated vertex has at least $i$ neighbors. 

\begin{lemma}\label{partial-coloring}
Let $v$ be a $2$-vertex of $G$ where $N(v)=\{v_1, v_2\}$ and $d(v_1)\leq 11$. 
If $\varphi$ is a $(1, 10)$-coloring of $G-v$, then $\varphi(v_1)=1$ and $\varphi(v_2)=10$.
\end{lemma}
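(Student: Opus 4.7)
The plan is to leverage the minimality of $G$: since $G-v$ has fewer vertices, embeds on the same surface, and has girth at least $5$, it admits a $(1,10)$-coloring. For any such coloring $\varphi$, extendability to $v$ would yield a $(1,10)$-coloring of $G$, contradicting that $G$ is a counterexample. So I only need to show that every \emph{non-extendable} $(1,10)$-coloring of $G-v$ satisfies $\varphi(v_1)=1$ and $\varphi(v_2)=10$. I would do this by case analysis on $\big(\varphi(v_1),\varphi(v_2)\big)\in\{1,10\}^{2}$, ruling out all three alternatives to $(1,10)$.

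The two symmetric easy cases are when $\varphi(v_1)=\varphi(v_2)$. If both are $10$, color $v$ with $1$: then $v$ has no color-$1$ neighbor, while the color-$10$ degrees of $v_1$ and $v_2$ are unchanged, so the coloring extends. If both are $1$, color $v$ with $10$: the same reasoning applies with the roles of the colors swapped. In either case $\varphi$ extends, giving a contradiction.

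The genuinely nontrivial case is $\varphi(v_1)=10,\ \varphi(v_2)=1$. Here I would first try to color $v$ with $10$. This fails only if $v_1$ already has $10$ neighbors colored $10$ in $G-v$, which together with $d(v_1)\leq 11$ forces $d(v_1)=11$ and pins down the color of every neighbor of $v_1$ other than $v$ as $10$. Under this structural consequence, I would recolor $v_1$ from $10$ to $1$: this is still a valid $(1,10)$-coloring of $G-v$ because $v_1$ has no color-$1$ neighbor at all (all its $G-v$-neighbors are colored $10$), and the color-$10$ degrees of those neighbors only decrease when $v_1$ leaves the $10$-class. In the resulting coloring both $v_1$ and $v_2$ are colored $1$, so coloring $v$ with $10$ now extends the coloring to $G$, contradicting that $G$ is a counterexample.

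The main obstacle is exactly this recoloring step: the three other cases are immediate extensions, but in the case $(\varphi(v_1),\varphi(v_2))=(10,1)$ a direct extension can be blocked, and one must exploit the hypothesis $d(v_1)\leq 11$ to conclude that $v_1$'s neighborhood is forced into an all-$10$ configuration that can be \emph{re-engineered} by flipping $v_1$'s color. Once that observation is in hand, the remaining bookkeeping is routine.
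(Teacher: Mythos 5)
Your proposal is correct and follows essentially the same route as the paper: rule out $\varphi(v_1)=\varphi(v_2)$ by coloring $v$ with the opposite color, and in the case $(\varphi(v_1),\varphi(v_2))=(10,1)$ use $d(v_1)\leq 11$ to deduce that a $10$-saturated $v_1$ has all its $G-v$ neighbors colored $10$, so it can be recolored $1$ and then $v$ colored $10$. The paper states the recoloring step more tersely, but the underlying justification is exactly the one you spell out.
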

\begin{proof}
If $\varphi(v_1)=\varphi(v_2)$, then letting $\varphi(v)\in\{1,10\}\setminus\{\varphi(v_1)\}$ gives a $(1, 10)$-coloring of $G$, which is a contradiction.
If $v_1$ is $10$-saturated and $\varphi(v_2)=1$, then recolor $v_1$ with $1$; this is possible since $d(v_1)\leq 11$.
Now let $\varphi(v)=10$ to obtain a $(1, 10)$-coloring of $G$, which is again a contradiction.
If $v_1$ is not $10$-saturated and $\varphi(v_2)=1$, then let $\varphi(v)=10$ to obtain a $(1, 10)$-coloring of $G$.
Thus, $\varphi(v_2)=10$ and $\varphi(v_2)=1$.
\end{proof}

\begin{lemma}\label{vx-degree}
If $v$ is an $11^-$-vertex of $G$, then $v$ is adjacent to at least one $12^+$-vertex.
\end{lemma}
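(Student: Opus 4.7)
The plan is to argue by contradiction via the minimality of $G$. Assume $v$ is an $11^-$-vertex every neighbor of which is also an $11^-$-vertex. Let $\varphi$ be a $(1, 10)$-coloring of $G-v$, which exists by minimality. I will modify $\varphi$ on a small set of vertices and then extend the modified coloring to $v$, producing a $(1, 10)$-coloring of $G$ and contradicting the choice of $G$.

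First I would examine the set $U$ of $10$-saturated neighbors of $v$ under $\varphi$. For each $u\in U$, $u$ is colored $10$ and has $10$ color-$10$ neighbors in $G-v$, so $d_G(u)\geq 11$; combined with the hypothesis that every neighbor of $v$ is an $11^-$-vertex, this forces $d_G(u)=11$, and therefore $u$ has exactly $10$ neighbors in $G-v$, all colored $10$. In particular $u$ has no color-$1$ neighbor. Moreover, since $G$ has girth at least $5$, any two neighbors of $v$ are non-adjacent, so $U\subseteq N(v)$ is an independent set of $G$.

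Next I would recolor every vertex of $U$ from $10$ to $1$, obtaining a new coloring $\varphi'$ of $G-v$. This is valid: each recolored $u$ has no color-$1$ neighbor (its non-$v$ neighbors are still color $10$, and $U$ is independent); each color-$10$ neighbor of $u$ merely loses a color-$10$ neighbor; and no originally color-$1$ vertex gains a color-$1$ neighbor, since no $u\in U$ has any color-$1$ neighbor in $\varphi$. By construction, no neighbor of $v$ is $10$-saturated in $\varphi'$.

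Finally I would extend $\varphi'$ to $v$. If $v$ has at most $10$ color-$10$ neighbors in $\varphi'$, color $v$ with $10$: each color-$10$ neighbor $w$ of $v$ lies outside $U$ and so was not $10$-saturated in $\varphi$, and by girth at least $5$ is non-adjacent to $U$, so its color-$10$ degree in $\varphi'$ matches that in $\varphi$ and grows by only one when $v$ is colored. Otherwise $v$ has $11$ color-$10$ neighbors in $\varphi'$, which forces $U=\emptyset$ and all $11$ neighbors of $v$ to be colored $10$ in $\varphi$; then color $v$ with $1$, which is safe since $v$ has no color-$1$ neighbor. Either way we obtain a $(1, 10)$-coloring of $G$, the desired contradiction. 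The main subtlety is the bookkeeping of both color constraints under the recoloring of $U$, which is handled by the independence of $U$ and the absence of color-$1$ neighbors of $U$-vertices; both properties follow from the girth-$5$ hypothesis.
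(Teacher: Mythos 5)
Your proof is correct and takes essentially the same approach as the paper's: obtain a $(1,10)$-coloring of $G-v$ by minimality, recolor the $10$-saturated neighbors of $v$ with $1$ (valid because their degree is at most $11$ forces all their non-$v$ neighbors to be colored $10$), and then extend to $v$. The only cosmetic difference is that the paper dispatches the ``all neighbors colored $10$'' case at the start (coloring $v$ with $1$), whereas you handle it at the end.
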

\begin{proof}
	Suppose that every neighbor of $v$ has degree at most $11$.
	Since $G$ is a counterexample with the minimum number of vertices, $G-v$ has a $(1,10)$-coloring with two colors $1$ and $10$.
	There must exist a neighbor of $v$ that is colored with $1$, otherwise we can color $v$ by $1$ to obtain a $(1, 10)$-coloring of $G$, which is a contradiction. 
	Since each neighbor of $v$ has degree at most $11$, we can recolor each $10$-saturated neighbor of $v$ with the color $1$.
	Now we can color $v$ with the color $10$ since $v$ has no $10$-saturated neighbor and $v$ has at most ten neighbors colored with $10$. 
	This is a $(1, 10)$-coloring of $G$, which is a contradiction. 
\end{proof}

\begin{lemma}\label{no-22}
There are no $2$-vertices adjacent to each other in $G$.
\end{lemma}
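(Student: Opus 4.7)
The plan is a minimum-counterexample extension argument. Assume for contradiction that $u$ and $v$ are adjacent $2$-vertices of $G$, and write $u_1$ for the neighbor of $u$ other than $v$ and $v_1$ for the neighbor of $v$ other than $u$. I would first observe that the girth-at-least-$5$ hypothesis forces $u, v, u_1, v_1$ to be pairwise distinct and $u_1 v_1 \notin E(G)$, since otherwise there would be a cycle of length at most $4$.

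Next, set $G' := G - \{u,v\}$. The graph $G'$ has strictly fewer vertices than $G$, still has girth at least $5$, and is still embeddable on the projective plane, so the minimality of $G$ supplies a $(1,10)$-coloring $\varphi$ of $G'$. The main step is to extend $\varphi$ to $u$ and $v$ via the following ``flip'' rule: let $\varphi(u)$ be the element of $\{1,10\} \setminus \{\varphi(u_1)\}$ and let $\varphi(v)$ be the element of $\{1,10\} \setminus \{\varphi(v_1)\}$.

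To verify that this produces a valid $(1,10)$-coloring of $G$, I would check two things. First, the color-class degrees of $u_1$ and $v_1$ are unchanged from $G'$, because the restored neighbors $u$ and $v$ were given colors distinct from $\varphi(u_1)$ and $\varphi(v_1)$ respectively. Second, each of $u$ and $v$ has at most one potential same-colored neighbor, namely the other vertex of $\{u,v\}$, so each has color-class degree at most $1$, which respects both the bound of $1$ for color class $1$ and the bound of $10$ for color class $10$. This yields a $(1,10)$-coloring of $G$, contradicting $G$'s status as a counterexample.

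There is no serious obstacle in this lemma. The only insight is to delete both endpoints of the edge $uv$ simultaneously rather than just one; this liberates both outside neighbors $u_1$ and $v_1$ from having a new constraint imposed by the extension, and the ``flip'' rule then disposes of all four possibilities for $(\varphi(u_1), \varphi(v_1))$ uniformly, with no case analysis required.
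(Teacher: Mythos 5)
Your proposal is correct and is essentially the paper's own argument: delete both endpoints of the edge $uv$, extend a $(1,10)$-coloring of $G-\{u,v\}$ by giving each of $u,v$ the color opposite to that of its outer neighbor, and observe that the resulting same-colored adjacency between $u$ and $v$ (when it occurs) is harmless since even color class $1$ tolerates degree $1$. The only cosmetic difference is that the paper splits into the cases $\varphi(u')=\varphi(v')$ and $\varphi(u')\neq\varphi(v')$, whereas your flip rule handles all cases uniformly.
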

\begin{proof}
Assume two $2$-vertices $u, v$ are adjacent to each other and $N(u)=\{v, u'\}$ and $N(v)=\{u, v'\}$. 
Since $G$ is a counterexample with the minimum number of vertices, $G\setminus\{u,v\}$ has a $(1,10)$-coloring $\varphi$ with two colors $1$ and $10$.
If $\varphi(u')=\varphi(v')$, then by letting $\varphi(u)=\varphi(v)\in\{1, 10\}\setminus\{\varphi(v')\}$ we obtain a $(1, 10)$-coloring of $G$, which is a contradiction.
Otherwise, by letting $\varphi(u)\in\{1, 10\}\setminus\{\varphi(u')\}$ and $\varphi(v)\in\{1, 10\}\setminus\{\varphi(v')\}$, we obtain a $(1, 10)$-coloring of $G$, which is also a contradiction.
\end{proof}

Given a vertex $v$ on a face $f$, the {\it $f$-external neighbors} of $v$ are the neighbors of $v$ that are not on $f$. 
A $5$-face is {\it special} if the degrees of the vertices are $2, 12^+, 2, 5, 3$ is some cyclic order. 
Note that by Lemma~\ref{vx-degree}, the $f$-external neighbor of a $3$-vertex on a special face $f$ must be high.

\begin{lemma}\label{special-faces-num}
A vertex $v$ of degree $5$ in $G$ is incident to at most two special faces. 
\end{lemma}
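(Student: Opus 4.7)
The plan is to suppose for contradiction that $v$ is incident to three distinct special faces $f_1, f_2, f_3$, and to derive a contradiction. Since $v$ has degree $5$, the faces at $v$ form a cyclic sequence with each face using two consecutive edges of the rotation at $v$; the three special faces thus contribute six edge-face incidences among only five edges, so by pigeonhole some edge $vu$ is shared by two of them, which I take to be $f_1$ and $f_2$. The cyclic degree sequence $2, 12^+, 2, 5, 3$ of a special face, with $v$ occupying the slot of degree $5$, forces the two face-neighbors of $v$ on any special face to be a $2$-vertex and a $3$-vertex; in particular $d(u) \in \{2, 3\}$.

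The first substantive step is to handle $d(u) = 3$. On each of $f_1$ and $f_2$, $u$ occupies the position-$5$ slot (the ``$3$''-vertex), and its non-$v$ face-neighbor is the position-$1$ slot, which is a $2$-vertex. Since $u$ has degree $3$, these two face-neighbors together with $v$ exhaust all neighbors of $u$, so every neighbor of $u$ has degree at most $5$. But $u$ is itself an $11^-$-vertex, so Lemma~\ref{vx-degree} forces $u$ to have a $12^+$-neighbor, a contradiction. Hence $d(u) = 2$.

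Now assume $u$ is a $2$-vertex. By the slot analysis, the other $v$-face-neighbor on each of $f_1, f_2$ is a $3$-vertex; call them $n_1$ and $n_3$. I then split on the cyclic position of $f_3$ at $v$. If $f_3$ is cyclically adjacent at $v$ to $f_1$ or to $f_2$, it shares an edge $vu'$ with one of them, and $u' \in \{n_1, n_3\}$ is a $3$-vertex incident to two special faces; then the preceding paragraph applied to $u'$ delivers a contradiction. Otherwise $f_3$ occupies the unique slot at $v$ not adjacent to $\{f_1, f_2\}$ (the interleaved ``YYNYN'' pattern), and then the five neighbors of $v$ are exactly $u$ (a $2$-vertex), $n_1$ and $n_3$ (two $3$-vertices), and the two face-neighbors of $v$ on $f_3$ (a $2$-vertex and a $3$-vertex by the slot analysis). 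All five neighbors have degree at most $3$, so $v$ --- itself an $11^-$-vertex --- has no $12^+$-neighbor, contradicting Lemma~\ref{vx-degree}.

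The main obstacle is bookkeeping the two cyclic configurations of three special faces around $v$ uniformly: three consecutive special faces (which reduce back to the degree-$3$ case via a shared $3$-vertex), and the interleaved pattern (which collapses directly via Lemma~\ref{vx-degree}). Once the cyclic rotation at $v$ is set up, both subcases funnel into the same two tools --- the slot-analysis-plus-Lemma~\ref{vx-degree} contradiction of the second paragraph, and one final appeal to Lemma~\ref{vx-degree} at $v$ itself --- which together carry the entire argument.
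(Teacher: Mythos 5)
Your proof is correct. It uses the same two ingredients as the paper --- Lemma~\ref{vx-degree} and the rigidity of the cyclic degree pattern $2, 12^+, 2, 5, 3$ --- but organizes the case analysis differently. The paper's first move is to apply Lemma~\ref{vx-degree} to $v$ itself: fixing a high neighbor $v_5$, neither face containing the edge $vv_5$ can be special (the face-neighbors of the $5$-vertex on a special face are a $2$-vertex and a $3$-vertex), so three special faces would have to occupy the three consecutive corners away from $vv_5$; the middle one then shares the edge to its $3$-vertex with another alleged special face, and the contradiction is exactly your degree-$3$ case (the external neighbor of that $3$-vertex must be high, which the adjacent special face's pattern forbids). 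Your route substitutes a pigeonhole on shared edges for this placement argument, at the cost of also having to treat the shared-$2$-vertex and interleaved configurations; you handle both correctly (the interleaved one collapses because all five neighbors of $v$ would be $3^-$-vertices, violating Lemma~\ref{vx-degree} at $v$), but the paper's opening move renders those cases vacuous and yields a shorter proof.
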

\begin{proof}
Let $v_1, \ldots, v_5$ be the neighbors of $v$ in some cyclic order.
Without loss of generality, assume $v_5$ is a high neighbor of $v$, which is guaranteed by Lemma~\ref{vx-degree}.
This implies that the two faces incident to an edge $vv_5$ cannot be special faces, by the cyclic ordering of the vertices on a special face. 
Suppose that $v$ is incident to three special faces, and let $f$ be the face incident to $v_3v$ and $vv_2$. 
Without loss of generality assume $v_2$ and $v_3$ are a $2$-vertex and a $3$-vertex, respectively.
By Lemma~\ref{vx-degree}, the $f$-external neighbor of $v_3$ must be high.
Yet, the face incident to $vv_3$ that is not $f$ cannot be a special face by the cyclic ordering of the vertices of a special face, which is a contradiction.
\end{proof}

A $5$-face $f$ is an {\it $X_1$-face} if the degrees of the vertices are $2, 12^+, 2, 12^+, 3$ in some cyclic order and the $f$-external neighbor of the $3$-vertex on $f$ is not high.
A $5$-face $f$ is an {\it $X_2$-face} if the degrees of the vertices are $2, 12^+, 2, 12^+, 4$ in some cyclic order and the degrees of the neighbors of the $4$-vertex on $f$ are $11^-, 2, 12^+, 2^+$ in some cyclic order. 
A $5$-face $f$ is a {\it $Y_1$-face} if the degrees of the vertices are $2, 12^+, 2, 4, 3$ in some cyclic order, the degrees of the neighbors of the $4$-vertex on $f$ are $2, 3, 11^-, 12^+$ is some cyclic order, and the faces incident to the two $2$-vertices on $f$ that are not $f$ are an $X_1$-face and an $X_2$-face. 
A $5$-face $f$ is a {\it $Y_2$-face} if the degrees of the vertices are $2, 12^+, 2, 3, 3$ in some cyclic order, 
and the faces incident to the two $2$-vertices on $f$ that are not $f$ are both $X_1$-faces. 
See Figure~\ref{fig-defs-faces}.

A {\it bad} face is a $Y_1$-face or a $Y_2$-face. 
A $5$-face $f$ is a {\it terrible face} if the degrees of the vertices are $2, 12^+, 2, 4, 4$ in some cyclic order, the degrees of the neighbors of both $4$-vertices on $f$ are $2, 4, 11^-, 12^+$ is some cyclic order, and the faces incident to the two $2$-vertices on $f$ that are not $f$ are both $X_2$-faces. 

\begin{figure}[h]
	\begin{center}
  \includegraphics{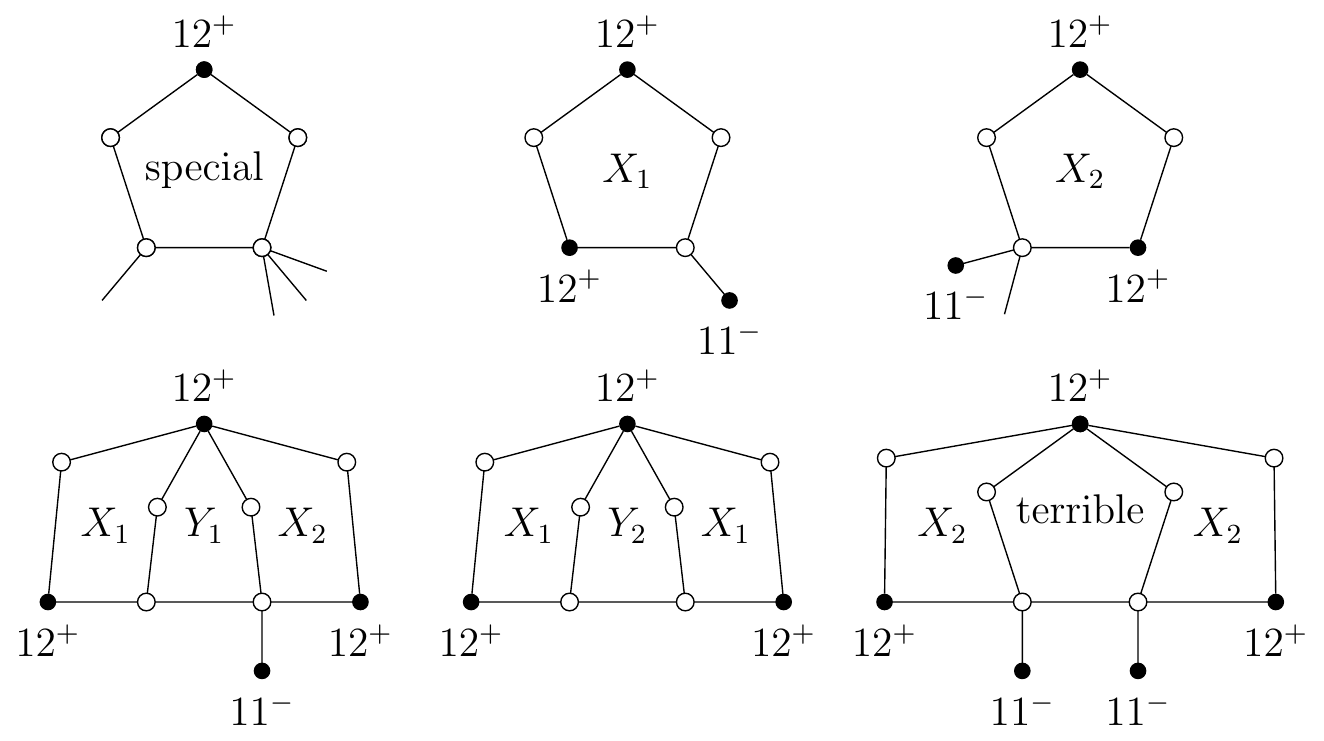}
  \caption{A special face and $X_1$-, $X_2$-, $Y_1$-, $Y_2$-faces and a terrible face.}
  \label{fig-defs-faces}
	\end{center}
\end{figure}

\begin{lemma}\label{terrible-faces-num}
A $12^+$-vertex $v$ of $G$ has at most $\min\{\lfloor {d(v)\over 3}\rfloor, d(v)-12\}$ incident terrible faces. 
\end{lemma}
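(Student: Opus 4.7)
The plan is to prove the two bounds in the minimum separately. For convenience, label each terrible face $f_i$ at $v$ in cyclic order as $(u_i, v, u_i', w_i', w_i)$, where $u_i, u_i'$ are the $2$-vertex face-neighbors of $v$, and $w_i, w_i'$ are the $4$-vertices with $u_i$ adjacent to $w_i$ and $u_i'$ to $w_i'$.  By definition of a terrible face, the face incident to $u_i$ other than $f_i$ is an $X_2$-face, and likewise for $u_i'$.

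For the bound $t \le \lfloor d(v)/3\rfloor$, the plan is a structural argument about the cyclic order of faces at $v$.  First I would rule out two terrible faces sharing an edge at $v$: such a shared edge at $v$ would be $vu$ for a $2$-vertex $u$ lying on both terrible faces; but $u$ has only two incident faces, and the terrible-face definition forces one of them to be an $X_2$-face, a contradiction.  Next I would rule out two terrible faces having exactly one face $g$ between them at $v$: $g$ is forced to be an $X_2$-face as the ``other'' face of both $2$-vertices bordering it at $v$.  Inside $g$, which has cyclic degree sequence $(2, 12^+, 2, 12^+, 4)$, $v$ sits at position $2$ between its two $2$-vertex face-neighbors on $g$; hence one of these $2$-vertices lies at position $3$ and, by the $X_2$-face structure, must have its non-$v$ neighbor be a $12^+$-vertex.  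This contradicts the requirement coming from the neighboring terrible face that the non-$v$ neighbor be a $4$-vertex.  Consequently at least two non-terrible faces separate any two consecutive terrible faces at $v$, so $3t \le d(v)$.

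For the bound $t \le d(v) - 12$, the plan is a coloring argument via minimality.  Suppose for contradiction that $t \ge d(v) - 11$, and let $U = \{u_i, u_i' : 1 \le i \le t\}$ be the $2t$ distinct $2$-vertex neighbors of $v$ on terrible faces.  By minimality, $G - v$ admits a $(1,10)$-coloring $\psi$; since each $u \in U$ has only its $4$-vertex partner left in $G-v$, the color $\psi(u)$ is flexible.  I would construct an extension of $\psi$ to a $(1,10)$-coloring of $G$ by carefully choosing $\psi(u)$ for each $u \in U$ and then $\psi(v)$, exploiting the fact that $v$ has at most $d(v) - 2t \le 22 - d(v) \le 10$ non-$U$ neighbors (whose colors are fixed).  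Since this count does not exceed $10$, setting each $\psi(u) = 1$ keeps $v$'s color-$10$ neighborhood within the permitted $10$, so $\psi(v) = 10$ becomes a candidate extension.  Lemma~\ref{partial-coloring}, applied to each $u \in U$ via the various subgraph colorings of $G - u$, tightly pins down the colors and saturation status of $v, w_i, w_i'$; combining these constraints with the flexibility on $U$ should yield the desired extension, contradicting $G$ being a counterexample.

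The main obstacle is the coloring construction for the second bound: the case analysis must simultaneously handle $\psi$'s assignment on the non-$U$ neighbors of $v$ and on the $4$-vertex partners $w_i, w_i'$, while avoiding over-saturation of any non-$U$ color-$10$ neighbor of $v$ (which could already be $10$-saturated in $\psi$) and of the $w_i, w_i'$ when $\psi(u)$ is reassigned.  Resolving these cases relies on the specific neighborhood structure of the $w_i, w_i'$ guaranteed by the terrible-face definition, together with repeated applications of Lemma~\ref{partial-coloring}.
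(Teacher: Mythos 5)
Your argument for the bound $t\le\lfloor d(v)/3\rfloor$ is correct and is the same argument the paper uses (the paper states it in one line: no two terrible faces can share an $X_2$-face; your expansion via the cyclic degree sequence $2,12^+,2,12^+,4$ of a shared middle face is exactly the right justification).

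The bound $t\le d(v)-12$ is where there is a genuine gap, and your plan diverges from the paper in a way that runs into real obstructions. First, deleting $v$ itself forfeits the main tool: Lemma~\ref{partial-coloring} applies to a coloring of $G-u$ for a $2$-vertex $u$, and it pins down colors \emph{in that particular coloring}. Your proposal to apply it ``to each $u\in U$ via the various subgraph colorings of $G-u$'' combines conclusions about many different colorings as if they constrained the single coloring $\psi$ of $G-v$; that inference is not valid. Second, the extension step does not go through as described: (i) recoloring $u\in U$ with $1$ fails whenever its $4$-vertex partner $w$ is colored $1$ and already $1$-saturated by a neighbor other than $u$, and the terrible-face structure of $w$ (neighbors of degrees $2,4,11^-,12^+$) does not rule this out; (ii) more fatally, to color $v$ with $10$ you must also ensure that no neighbor of $v$ already colored $10$ is $10$-saturated, and the non-$U$ neighbors of $v$ are completely unconstrained --- in particular a $12^+$-neighbor colored $10$ and $10$-saturated cannot be recolored with $1$ (it may have several $1$-colored neighbors), and nothing in the terrible-face hypothesis touches these vertices. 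The paper avoids both problems by never attempting to color $v$: it deletes a single $2$-vertex $v_4$ on a carefully chosen terrible face (chosen so that some non-terrible face at $v$ fixes a starting point in the cyclic order), uses Lemma~\ref{partial-coloring} to force $\varphi(v)=10$ and $\varphi(u_4)=1$, then propagates color constraints around the terrible faces to show each terrible face contributes a distinct $1$-colored neighbor of $v$ (with two from the last one). Combined with $v$ being $10$-saturated and $v_4$ uncolored, this gives $v$ at least $d(v)+1$ neighbors --- a counting contradiction rather than a recoloring. To repair your proof you would need to switch to this kind of argument; as written, the recoloring plan cannot be completed.
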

\begin{proof}
By the configuration of a terrible face, no two terrible faces can share an $X_2$-face. 
Thus, $v$ has at most $\lfloor {d(v)\over 3}\rfloor$ incident terrible faces. 

Suppose $v$ has at least $d(v)-11$ incident terrible faces. 
We will show this implies in some partial coloring $\varphi$ of $G$, $v$ has one uncolored neighbor, ten neighbors colored with $10$, and at least $d(v)-10$ neighbors colored with $1$. 
This contradicts the degree of $v$.

Let $v_1, \ldots, v_{d(v)}$ be the neighbors of $v$ is some cyclic order and for each $2$-vertex $v_i$, let $N(v_i)=\{u_i, v\}$ for $i\in\{1, \ldots, d(v)\}$.
From now on, the addition of the indicies of vertices will be modulo $d(v)$.
Also, for a given terrible face $u_iv_ivv_{i+1}u_{i+1}$ and $j\in\{i, i+1\}$, let $N(u_j)=\{v_j, u_{j-1}, u_{j+1}, w_j\}$. 
Without loss of generality assume $u_4v_4vv_5u_5$ is a terrible face incident to $v$ where the face incident to $v_{1},v,v_{2}$ is not a terrible face. 
Note that such a configuration exists since $3(d(v)-11)=d(v)$ cannot be satisfied. 

Since $G$ is a minimum counterexample, $G-v_4$ has a $(1, 10)$-coloring $\varphi$, and by Lemma~\ref{partial-coloring}, $\varphi(u_4)=1$ and $\varphi(v)=10$.
If $\varphi(u_3)=1$, then $\varphi(w_4)=\varphi(u_5)=10$.
If $w_4$ is $10$-saturated, then recolor $w_4$ with $1$, which is possible since $d(w_4)\leq 11$. 
Now letting $\varphi(v_4)=1$ and recoloring $u_4$ with $10$ gives a $(1, 10)$-coloring of $G$, which is a contradiction. 
Thus $\varphi(u_3)=10$, which implies that $\varphi(v_3)=1$ since otherwise letting $\varphi(v_4)=10$ and recoloring $v_3$ with $1$ gives a $(1, 10)$-coloring of $G$, which is a contradiction.

Similarly, for a terrible face $u_iv_ivv_{i+1}u_{i+1}$, if $\varphi(u_{i-1})=10$, then $\varphi(v_{i-1})=1$, since otherwise letting $\varphi(v_4)=10$ and recoloring $v_{i-1}$ with $1$ gives a $(1, 10)$-coloring of $G$, which is a contradiction.
Suppose $\varphi(u_{i-1})=1$ and $\varphi(v_i)=10$. 
Then letting $\varphi(v_4)=10$ and recoloring $v_i$ with $1$ must not give a $(1, 10)$-coloring of $G$, so $\varphi(u_i)=1$. 
Now if $w_i$ is $10$-saturated, then recolor it with $1$, which is possible since $d(w_i)\leq 11$.
Now letting $\varphi(v_4)=10$ and recoloring $v_i$ and $u_i$ with $1$ and $10$, respectively gives a $(1, 10)$-coloring of $G$, which is a contradiction.
Thus, if $\varphi(u_{i-1})=1$, then $\varphi(v_i)=1$. 
That is, for every terrible face $u_iv_ivv_{i+1}u_{i+1}$, at least one of $\varphi(v_{i-1})$ and $\varphi(v_i)$ is $1$.

Now consider a terrible face $u_jv_jvv_{j+1}u_{j+1}$ that is incident to $v$ with the highest index $j$. 
Since $u_{j+2}\neq u_3$, by the above logic, either $\varphi(v_{j+1})=1$ (if $\varphi(u_{j+2})=1$) or $\varphi(v_{j+2})=1$ (if $\varphi(u_{j+2})=10$). 
This implies that at least two of $v_{j-1}, v_j, v_{j+1}, v_{j+2}$ are colored with $1$.

Since there are at least $d(v)-11$ terrible faces incident to $v$ and none of the vertices that we verified to be colored with $1$ can be counted twice, $v$ has at least $d(v)-10$ neighbors colored with $1$. 
Since coloring $v_4$ with $10$ must not give a $(1, 10)$-coloring of $G$, we know that $v$ is $10$-saturated, which implies that $v$ has ten neighbors colored with $10$. 
Since $v_4$ is uncolored, we showed that $v$ has at least $d(v)-10+10+1=d(v)+1$ neighbors, which is a contradiction.
\end{proof}

\begin{lemma}\label{bad-faces-num}
A $12^+$-vertex $v$ of $G$ has at most $\min\{\lfloor {d(v)\over 3}\rfloor, d(v)-12\}$ incident bad faces. 
\end{lemma}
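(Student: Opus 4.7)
The plan is to prove the two bounds separately, mirroring the proof of Lemma~\ref{terrible-faces-num}.

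For the $\lfloor d(v)/3\rfloor$ bound, I would show that no two bad faces incident to $v$ share a common flanking face. Label the faces incident to $v$ cyclically as $f_1,\ldots,f_{d(v)}$, where $f_k$ lies between edges $vv_k$ and $vv_{k+1}$, and for a $2$-vertex $v_k$ let $w_k$ denote its unique non-$v$ neighbor. If $f_k$ is a bad face with $2$-vertices $v_k,v_{k+1}$, then by definition $f_{k-1}$ and $f_{k+1}$ are $X_1$- or $X_2$-faces and $d(w_k),d(w_{k+1})\in\{3,4\}$. Suppose bad faces $f_k$ and $f_{k+2}$ share the flank $f_{k+1}$. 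Let the vertices of $f_{k+1}$ be $b_1,b_2,b_3,b_4,b_5$ in cyclic order with degrees $(2,12^+,2,12^+,3)$ or $(2,12^+,2,12^+,4)$. Since $v_{k+1},v_{k+2}$ are $2$-vertices both adjacent to $v$ on $f_{k+1}$, we must have $v=b_2$ and $\{v_{k+1},v_{k+2}\}=\{b_1,b_3\}$. Whichever of $v_{k+1},v_{k+2}$ occupies position $b_3$ has non-$v$ neighbor $b_4$, a $12^+$-vertex, contradicting $d(w)\in\{3,4\}$. Hence consecutive bad faces at $v$ are separated by at least two non-bad faces, and $v$ has at most $\lfloor d(v)/3\rfloor$ incident bad faces.

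For the $d(v)-12$ bound, I would assume for contradiction that $v$ has at least $d(v)-11$ bad faces and mount a coloring argument modeled on the second half of Lemma~\ref{terrible-faces-num}. Pick one bad face $f$, delete its $2$-vertex $v^*$, and obtain a $(1,10)$-coloring $\varphi$ of $G-v^*$ by minimality; by Lemma~\ref{partial-coloring} $\varphi(w^*)=1$ and $\varphi(v)=10$, and the non-extendability of $\varphi$ forces $v$ to be $10$-saturated and $w^*$ to be $1$-saturated. Then for each other bad face $f_k$ I would carry out a recoloring case analysis --- separately for $Y_1$ and $Y_2$ --- using the $12^+$ off-face neighbor of each $3$-vertex (provided by Lemma~\ref{vx-degree}) and the $11^-$ and $12^+$ off-face neighbors of the $4$-vertex in the $Y_1$ case, to force at least one of the neighbors $v_k,v_{k+1}$ of $v$ on $f_k$ (or a nearby $2$-vertex on a flanking face) to be colored $1$. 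For one additional specifically chosen bad face --- taken analogously to the terrible-face argument's ``highest index'' face --- I would extract a second forced $1$-colored vertex, pushing the total to $d(v)-10$ distinct $1$-colored neighbors of $v$; the no-sharing result from Step 1 ensures these are all distinct. Combined with the ten $10$-colored neighbors and the uncolored $v^*$, this yields at least $d(v)+1$ distinct neighbors of $v$, a contradiction.

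The main obstacle is the case-analysis branching in Step 2. Unlike terrible faces, where both non-$v$ face-vertices are $4$-vertices with identical local structure, bad faces contain either two $3$-vertices ($Y_2$) or one $3$-vertex together with one $4$-vertex ($Y_1$), and the $X_1$ versus $X_2$ identity of each flank depends on the orientation of the bad face. Each subcase demands a tailored recoloring sequence --- typically recoloring $w_k$ or one of its high off-face neighbors to free a color on $v_k$ --- that simultaneously preserves the $10$-saturation of $v$ and of the off-face high vertices and the $1$-saturation of the existing $1$-colored vertices; the branching is wider than in Lemma~\ref{terrible-faces-num} but proceeds by the same template.
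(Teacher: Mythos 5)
Your first step (the $\lfloor d(v)/3\rfloor$ bound, via showing that two bad faces at $v$ cannot share a flanking $X_1$- or $X_2$-face) is correct, and is in fact spelled out in more detail than the paper bothers to. The problem is your second step: you set up the right frame (delete a $2$-vertex of one bad face, invoke Lemma~\ref{partial-coloring}, force one $1$-coloured neighbour of $v$ per bad face plus one extra for a well-chosen final face), but the substantive content --- the ``tailored recoloring sequence'' for each $Y_1$/$Y_2$ subcase --- is announced rather than carried out, and you yourself flag the branching as the unresolved obstacle. As written, that step is a plan, not a proof.

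The paper closes exactly this gap with a one-sentence observation that you miss, and which reverses your assessment of the difficulty: the configuration of a bad face is a \emph{subgraph} of that of a terrible face. Concretely, a $Y_1$-face arises from a terrible face by deleting the edge from one $4$-vertex $u_i$ to its $11^-$-neighbour $w_i$ (turning $u_i$ into a $3$-vertex and the adjacent flank from an $X_2$- into an $X_1$-face, since the remaining external neighbour of $u_i$ is the other low vertex of the face and hence not high), and a $Y_2$-face arises by deleting both such edges. Every adjacency and degree bound used in the proof of Lemma~\ref{terrible-faces-num} either survives this deletion or belongs to a step that becomes vacuous (namely ``if $w_i$ is $10$-saturated, recolor it with $1$,'' which is simply skipped when $w_i$ does not exist). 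So the branching is strictly narrower, not wider, no separate $Y_1$/$Y_2$ analysis is needed, and the earlier argument applies verbatim. If you do insist on reproving the bound from scratch, you must actually exhibit the recolorings in each subcase rather than assert that they ``proceed by the same template.''
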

\begin{proof}
Follows from Lemma~\ref{terrible-faces-num}, since each bad face is a subgraph of a terrible face.
\end{proof}

A face $f_1$ is a {\it $(d_2, d_3)$-sponsor} of an adjacent face $f_2$ if $f_1$ and $f_2$ share the edge $u_2u_3$ where $d(u_2)=d_2, d(u_3)=d_3$ and $u_1,u_2,u_3,u_4$ are consecutive vertices of $f_1$ where $u_1,u_4$ are high vertices. 
A face that is not an $X_1$-face is a {\it sponsor} of another face if both $u_2,u_3$ are low vertices.
See Figure~\ref{fig-sponsor}.

\begin{figure}[h]
	\begin{center}
  \includegraphics{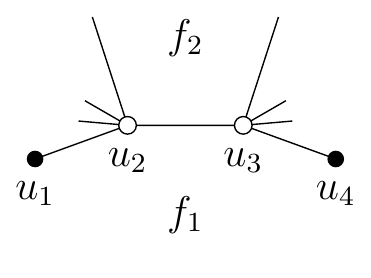}
  \caption{$f_1$ is a $(5,5)$-sponsor of $f_2$}
  \label{fig-sponsor}
	\end{center}
\end{figure}

\section{Discharging Procedure}\label{section-discharging}

Since the embedding of $G$ is fixed, we can let $F(G)$ denote the set of faces of this embedding. 
In this section, we will prove that $G$ cannot exist by assigning an \emph{initial charge} $\mu(z)$ to each $z\in V(G) \cup F(G)$, and then applying a discharging procedure to end up with {\it final charge} $\mu^*(z)$ at $z$.
We prove that the final charge sum is greater $6\gamma -12$, whereas the initial charge sum is equal to $6\gamma-12$.
The discharging procedure will preserve the sum of the initial charge, and hence we find a contradiction to conclude that the counterexample $G$ does not exist.

For each vertex $v\in V(G)$, let $\mu(v)=2d(v)-6$, and for each face $f\in F(G)$, let $\mu(f)=d(f)-6$. 
The initial charge sum is $6\gamma-12$, since
$$\sum_{z\in V(G)\cup F(G)} \mu(z)
	=\sum_{v\in V(G)} (2d(v)-6)+\sum_{f\in F(G)} (d(f)-6) 
	=-6\abs{V(G)}+6\abs{E(G)}-6\abs{F(G)}
	=6\gamma-12$$
The last inequality holds by Euler's formula.

\subsection{Discharging Rules}\label{rules}

The discharging rules (R1)--(R4) indicate how the vertices distribute their initial charge to incident faces. 
Rule (R5) is the only rule where a face sends charge to a vertex, and rules (R6)--(R8) instruct faces on how to send and receive charge between faces. 

Here are the discharging rules:

\begin{enumerate}[(R1)]

\item Each $4$-vertex sends charge ${1\over 2}$ to each incident face.

\item Each $5$-vertex $v$ sends charge ${3\over 2}$ to each incident special face and charge $1$ to each other incident face where a high neighbor of $v$ is not incident to $f$.

\item Each medium vertex $v$ distributes its initial charge uniformly to each incident face $f$ where a high neighbor of $v$ is not incident to $f$.

\item Each high vertex sends charge $2$ to each incident bad face and sends charge $\frac{3}{2}$ to each other incident face. 

\item Each face sends charge $1$ to each incident $2$-vertex. 

\item Each face $f_1$ sends charge ${1}$ to each face $f_2$ where $f_1$ is a $(3, 3)$-, $(3, 4)$-, $(4, 3)$-, or $(4, 4)$-sponsor of $f_2$. 

\item Each face $f_1$ that is not an $X_1$-face sends charge $1\over 2$ to each face $f_2$ where $f_1$ is a $(2, 3)$- or $(3, 2)$-sponsor of $f_2$. 

\item Assume $f_1$ is a $(2, 4)$- or $(4, 2)$-sponsor of $f_2$. 

\begin{enumerate}[(R8A)]

\item If $f_1$ is an $X_2$-face, then $f_1$ sends charge $1\over 2$ to $f_2$. 

\item If $f_1$ is not an $X_2$-face, then $f_1$ sends charge $1$ to $f_2$. 

\end{enumerate}

\end{enumerate}

\begin{figure}[h]
	\begin{center}
  \includegraphics[scale=0.9]{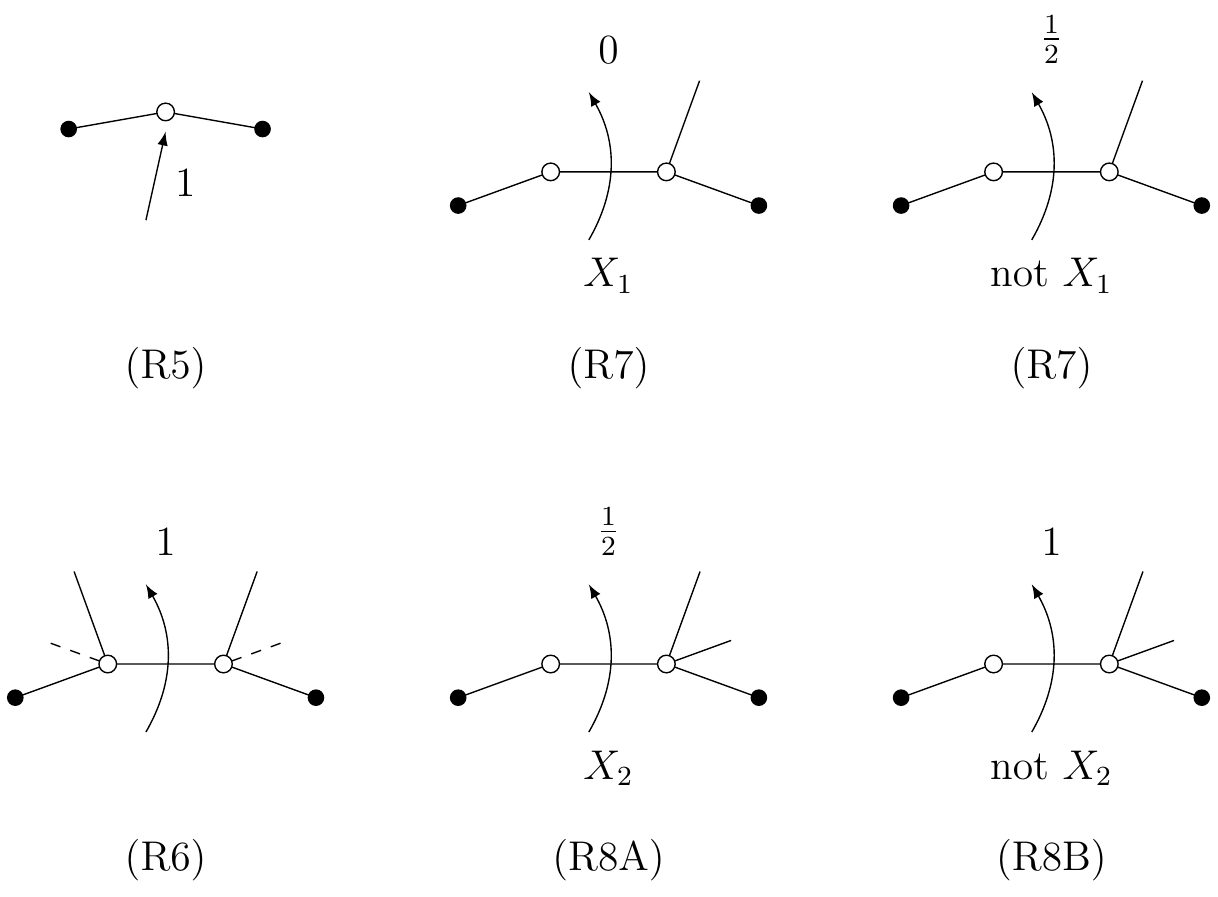}
  \caption{Discharging rules (R5)--(R8)}
  \label{fig-rules}
	\end{center}
\end{figure}

Here is a list of facts to keep in mind:

\begin{itemize}
\item A medium vertex $v$ will send charge either $0$ or at least ${2d(v)-6\over d(v)-2}\geq {3\over 2}$ to each incident face by (R3), since $v$ must have a high neighbor by Lemma~\ref{vx-degree}. 

\item A high vertex will send charge at least ${3\over 2}$ to each incident face by (R4).

\item If (R7) or (R8) happens, then (R5) must happen as well.

\item A face $f$ will spend charge at most $3\over 2$ each time $f$ is a sponsor, except for when (R8B) applies; in this case, $f$ receives charge  $1\over 2$ from the $4$-vertex on $f$, so the net charge sent is still $1+1-{1\over 2}={3\over 2}$.

\end{itemize}

Since (R7) and (R8) include instances of (R5), we will say that an instance of (R5) is {\it independent} if it is not part of an instance of (R7) and (R8).

\subsection{Claims}\label{claims}

We will first show that each vertex has nonnegative final charge.
Then, we will show that each face has nonnegative final charge.

\begin{claim}
Each vertex $v$ has nonnegative final charge.
\end{claim}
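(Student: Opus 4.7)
The plan is to verify the claim by a case analysis on $d(v)$, since only rules (R1)--(R5) involve vertex-face charge transfers, and to bound the total charge leaving $v$ against the initial charge $\mu(v)=2d(v)-6$.

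The small-degree cases should essentially be bookkeeping. For $d(v)=2$, the initial charge $-2$ is exactly compensated by two applications of (R5), one for each incident face. For $d(v)=3$, no rule touches $v$, so $\mu^*(v)=\mu(v)=0$. For $d(v)=4$, the initial charge $2$ is exactly spent by four applications of (R1). In each case $\mu^*(v)=0$.

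For $d(v)=5$, I would combine two earlier lemmas. Lemma~\ref{vx-degree} forces $v$ to have a high neighbor $w$, so both faces incident to the edge $vw$ already contain a high vertex and receive no charge from $v$ under (R2); this leaves at most three of the five incident faces that $v$ could contribute to. Lemma~\ref{special-faces-num} bounds the number of incident special faces by two, so in the worst case $v$ gives away $2\cdot\tfrac{3}{2}+1\cdot 1=4$, exactly matching $\mu(v)=4$. For medium $v$ with $6\le d(v)\le 11$, rule (R3) either splits $\mu(v)\ge 6$ uniformly among those incident faces whose boundary avoids a high neighbor of $v$ (leaving $\mu^*(v)=0$), or else every incident face contains a high neighbor of $v$ and the positive charge $2d(v)-6$ is retained entirely. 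Both outcomes are nonnegative.

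The main obstacle, though brief, is the high-vertex case, because this is the only place where a sharp structural bound is required. If $v$ is a high vertex with $b$ incident bad faces, rule (R4) ships out exactly $2b+\tfrac{3}{2}(d(v)-b)$, giving
\[
\mu^*(v)=2d(v)-6-2b-\tfrac{3}{2}\bigl(d(v)-b\bigr)=\tfrac{d(v)-b}{2}-6.
\]
Here I would appeal to Lemma~\ref{bad-faces-num} to conclude $b\le d(v)-12$, which yields $\mu^*(v)\ge 0$. The naive bound $b\le d(v)$ would not suffice here, so the real content of the claim is that the earlier structural analysis of bad faces was sharp enough to make (R4) pay for itself.
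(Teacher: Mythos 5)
Your proposal is correct and follows essentially the same route as the paper: the small-degree cases are identical bookkeeping, the $5$-vertex case uses Lemma~\ref{vx-degree} and Lemma~\ref{special-faces-num} exactly as the paper does, and the high-vertex case rests on Lemma~\ref{bad-faces-num}. The only (harmless) difference is that you handle all high vertices in one computation via $b\le d(v)-12$, whereas the paper splits into $12\le d(v)\le 18$ and $d(v)\ge 19$ and also invokes the $\lfloor d(v)/3\rfloor$ bound; your unified version suffices since only nonnegativity is needed.
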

\begin{proof}
If $v$ is a $19^+$-vertex, then by Lemma~\ref{bad-faces-num}, 
$\mu^*(v)\geq 2d(v)-6-2\lfloor {d(v)\over 3}\rfloor-{3\over 2}\lceil {2d(v)\over 3}\rceil>0$.
If $12\le d(v) \le 18$, then by Lemma~\ref{bad-faces-num}, $\mu^*(v)\geq 2d(v)-6-2(d(v)-12)-12\cdot{3\over 2}=0$.
%
%
If $v$ is a medium vertex, then by (R3), the final charge is nonnegative since $v$ only distributes its initial charge. 
If $v$ is a $5$-vertex, then by Lemma~\ref{special-faces-num}, $\mu^*(v)\geq 2\cdot5-6-2\cdot{3\over 2}-1=0$.
If $v$ is a $4$-vertex, then $\mu^*(v)=2\cdot 4-6-4\cdot{1\over 2}=0$.
If $v$ is a $3$-vertex, then $v$ neither sends nor receives any charge, so $\mu^*(v)=\mu(v)=2\cdot 3-6=0$.
If $v$ is a $2$-vertex, then by (R5), $v$ receives charge $1$ from each of its incident faces, so $\mu^*(v)=2\cdot 2-6+2\cdot 1=0$.
\end{proof}

\begin{claim}
Each face $f$ of degree at least $7$ has positive final charge.
\end{claim}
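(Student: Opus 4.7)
The plan is to bound $f$'s net outgoing charge and show it is strictly less than $\mu(f)=d(f)-6\geq 1$. Since $d(f)\geq 7$, the face $f$ is none of the special, $X_1$, $X_2$, $Y_1$, $Y_2$, or terrible configurations (all $5$-faces), so by (R4) each high vertex on $f$ sends $f$ at least $3/2$, by (R1) each $4$-vertex on $f$ refunds exactly $1/2$, and each sponsorship involving $f$ uses the full amount in (R6)--(R8). A crucial structural observation: applying Lemma~\ref{partial-coloring} twice -- once with each neighbor of a $2$-vertex in the role of $v_1$ -- yields that a $2$-vertex cannot have both neighbors of degree $\leq 11$; hence every $2$-vertex of $G$ has at least one high neighbor, and since a $2$-vertex on $f$ has both of its (only two) neighbors on $f$, this high neighbor must lie on $f$.

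Let $h$ be the number of high vertices on $f$. Cyclically, the vertex sequence of $f$ decomposes into $h$ ``gaps'' of lengths $g_1,\dots,g_h\geq 0$ summing to $d(f)-h$ between consecutive high vertices. The observation above forces every $2$-vertex on $f$ to sit at a gap endpoint, and a sponsor configuration across an edge of $f$ requires a high-low-low-high pattern, hence lies within a gap of length exactly $2$ whose two vertices are both low. For each gap I will verify that the maximum net charge $f$ pays toward the gap (the charge sent via (R5)--(R8) minus refunds received via (R1)--(R3) from non-high vertices inside the gap) is at most $c(g)$, where $c(0)=0$, $c(1)=1$, $c(2)=3/2$, and $c(g)=2$ for $g\geq 3$. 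The nontrivial value $c(2)=3/2$ is realized by the $(2,3)$-plus-sponsor case (cost $1+\tfrac{1}{2}$ via (R5) and (R7)) and by the $(2,4)$-plus-sponsor case (net cost $1+1-\tfrac{1}{2}$ via (R5), (R8B), and (R1)); the value $c(3)=2$ is realized by two $2$-vertices at the gap endpoints with a $3$-vertex between.

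Since $c(g)\leq (g+1)/2$ for every $g\geq 1$, summing yields $\sum_j c(g_j)\leq (d(f)-h+n)/2\leq d(f)/2$, where $n\leq h$ is the number of non-empty gaps; we also have $\sum_j c(g_j)\leq 2n$. Combined with the at least $3h/2$ charge received from the high vertices, the net outgoing is at most $\sum_j c(g_j)-3h/2$. If $h=0$, no $2$-vertex and no sponsorship can exist on $f$, so the net outgoing is $0$; if $h=1$, the unique gap has length $d(f)-1\geq 6$, so $\sum c\leq 2$ and the net outgoing is at most $\tfrac{1}{2}<1\leq d(f)-6$; if $h\geq 2$, the net outgoing is at most $d(f)/2-3h/2\leq (d(f)-6)/2<d(f)-6$ since $d(f)\geq 7$. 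In every case $\mu^*(f)>0$. The main obstacle will be establishing the per-gap bound $c(g)\leq (g+1)/2$ for $g=2$ and $g=3$: this requires a careful case-analysis of the allowed degree patterns inside the gap (using Lemma~\ref{no-22} to forbid adjacent $2$-vertices) and combining the sponsor amounts from (R6)--(R8) with the vertex refunds from (R1)--(R3); the summation argument above is then elementary.
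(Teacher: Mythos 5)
Your proof is correct, and it reaches the same conclusion by a noticeably different bookkeeping than the paper's. The paper amortizes locally per \emph{instance}: each sponsorship via (R6)--(R8) costs net at most $\tfrac32$ and is flanked by two high vertices each contributing $\tfrac32$ by (R4), so half of that $3$ covers it; each independent instance of (R5) is covered by half of the $\tfrac32$ from the one adjacent high vertex (guaranteed by Lemma~\ref{vx-degree}) plus an extra $\tfrac14$ drawn from $\mu(f)=d(f)-6$, and since there are at most $\lfloor d(f)/2\rfloor$ incident $2$-vertices by Lemma~\ref{no-22}, the total deficit $\tfrac14\lfloor d(f)/2\rfloor$ stays below $d(f)-6$ when $d(f)\ge 7$. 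You instead decompose the boundary into gaps between high vertices and bound a per-gap cost $c(g)$; the key structural inputs are the same as the paper's (every $2$-vertex on $f$ has a high neighbor on $f$, so $2$-vertices sit only at gap endpoints; a sponsorship edge forces a high--low--low--high pattern, i.e.\ a length-$2$ all-low gap; the $4$-vertex refund in the (R8B) case keeps the net at $\tfrac32$), and your values $c(1)=1$, $c(2)=\tfrac32$, $c(g)=2$ for $g\ge 3$ together with $c(g)\le(g+1)/2$ and the $h=0,1,\ge 2$ case split all check out. What your version buys is an explicit, self-contained account of where the expensive configurations can occur and a slightly sharper final bound; what the paper's version buys is brevity, since the half-charge amortization needs no gap enumeration. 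One small remark: your derivation that a $2$-vertex cannot have two $11^-$-neighbors via a double application of Lemma~\ref{partial-coloring} works, but it is exactly the content of Lemma~\ref{vx-degree}, which you could cite directly.
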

\begin{proof}
If $f$ sends charge by (R6), (R7), or (R8A), $f$ receives charge $2\cdot{3\over 2}$ and sends charge at most $3\over 2$ for each instance. 
If $f$ sends charge by (R8B), $f$ receives charge $2\cdot {3\over 2}+{1\over 2}$ and sends charge at most $1+1$ for each instance. 
In either case, $f$ can use half of the charge it receives from the high vertices (not the $4$-vertex when $f$ is a $(2, 4)$- or $(4, 2)$-sponsor) to take care of each instance of (R6)--(R8).

For each independent instance of (R5), one of the two neighbors of the $2$-vertex must be high by Lemma~\ref{vx-degree}, so $f$ receives charge at least ${3\over 2}$. 
Since $f$ may only use half of this ${3\over 2}$ to this instance, $f$ may need an additional $1-{1\over 2}\cdot{3\over 2}={1\over 4}$ charge, which can be provided by the initial face charge of $f$.
By Lemma~\ref{no-22}, there are at most $\lfloor {d(f)\over 2}\rfloor$ $2$-vertices incident with $f$.
If ${d(f)-6\over\lfloor d(f)/2\rfloor}> {1\over 4}$ is satisfied, then we have $\mu^*(f)> 0$, since $f$ may need an additional ${1\over 4}$ charge for each instance of (R5).
If $d(f)\geq 7$, then the aforementioned equation is satisfied.
\end{proof}

\begin{claim}
Each $6$-face $f$ has nonnegative final charge.
\end{claim}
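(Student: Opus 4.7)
Since $\mu(f) = d(f) - 6 = 0$, I need to show that $f$'s outgoing charge is at most its incoming charge. The outgoing charge consists of $1$ per incident $2$-vertex (via (R5)) plus the cost of each sponsor event originated by $f$ via (R6)--(R8B); as noted at the end of Section~\ref{rules}, each such event incurs a net cost of at most $3/2$ after absorbing its associated (R5) and any (R1) rebate from a $4$-vertex on the shared edge. The incoming charge comes from the incident vertices via (R1)--(R4)---each on-$f$ high vertex contributing $3/2$ by (R4)---plus any charge sent to $f$ by adjacent faces acting as $(d_2, d_3)$-sponsors of $f$. My plan is a case analysis on the number and cyclic positions of $2$-vertices on $f$, which by Lemma~\ref{no-22} form an independent set of size at most three, with three attainable only in the alternating pattern.

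In the routine subcases, I compare the total on-$f$ high-vertex contribution $(3/2)\,h$ with the total outgoing cost, where $h$ denotes the number of high vertices on $f$ and $k$ the number of $2$-vertices. Whenever $h \ge \lceil 2k/3\rceil$, the high vertices comfortably cover the $(R5)$ cost of the $2$-vertices; each sponsor event of $f$ is absorbed by the $3$ contributed by its two flanking high vertices; and the remaining incident vertex contributions are nonnegative. Lemma~\ref{vx-degree} applied to each $2$-vertex (whose two neighbors lie on $f$) together with the alternating structure when $k=3$ guarantees this inequality holds except in one borderline situation: exactly two $2$-vertices on $f$ share their \emph{only} on-$f$ high neighbor.

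The borderline situation is, up to symmetry, that $v_0$ and $v_2$ are $2$-vertices and $v_1$ is the sole high vertex among $v_1, v_3, v_5$. If $v_4$ is also high, then $v_1$ and $v_4$ are antipodal, and $f$ becomes a sponsor twice (across $v_2v_3$ and $v_5v_0$), each event's net $3/2$ being balanced against the $3$ contributed by $v_1, v_4$. Otherwise $v_3, v_4, v_5$ are all low, and Lemma~\ref{vx-degree} forces each of them to have a high neighbor off $f$. When $v_3, v_4, v_5$ are all $3$-vertices this off-$f$ neighbor is uniquely determined, so the faces adjacent to $f$ across $v_3v_4$ and across $v_4v_5$ both have two high flanking vertices and are therefore $(3,3)$-sponsors of $f$ via (R6), each sending $1$; together with $3/2$ from $v_1$ the incoming charge totals $7/2$, comfortably exceeding the $2$ owed via (R5). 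The main obstacle is the subcase where some $v_i \in \{v_3, v_4, v_5\}$ is a $4$-vertex: Lemma~\ref{vx-degree} then guarantees only that \emph{some} off-$f$ neighbor of $v_i$ is high, not necessarily the one flanking on a specific adjacent face, so the sponsor from that face need not fire. To close the accounting I would use that each such $4$-vertex sends $1/2$ directly to $f$ via (R1); together with whichever of the two adjacent sponsors does fire---on the side where $v_i$'s high off-$f$ neighbor happens to lie---the balance holds in every subcase. The remaining configurations (zero or one $2$-vertex, or $2$-vertices at antipodal positions $v_0, v_3$) all fall under the routine case.
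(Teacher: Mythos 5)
Your overall strategy is the same as the paper's: charge the $6$-face's $(R5)$ payments against the $\tfrac32$ received from each incident high vertex, absorb each sponsoring event into the $3$ supplied by its two flanking high vertices, and isolate the one hard configuration --- two $2$-vertices whose common neighbor $v_1$ is the only high vertex on $f$ --- which you then resolve exactly as the paper does, via incoming $(3,3)$-sponsorship when the opposite three vertices are $3$-vertices and via the $(R1)$ contribution when a $4$-vertex appears. However, there is a concrete gap in the hard case: from ``$v_3,v_5$ are not high and $v_4$ is not high'' you conclude that $v_3,v_4,v_5$ are all \emph{low}, i.e.\ $4^-$-vertices. That does not follow: ``low'' ($4^-$) and ``high'' ($12^+$) are not complementary, and one of $v_3,v_4,v_5$ could be a $5$-vertex or a medium ($6$- to $11$-) vertex. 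Your two subcases (``all $3$-vertices'' and ``some $v_i$ is a $4$-vertex'') therefore do not exhaust the configuration. The omitted subcase is easy to close --- a non-high $5^+$-vertex among $v_3,v_4,v_5$ has no high neighbor incident to $f$ (a chord to $v_1$ would create a $3$- or $4$-cycle), so it sends at least $1\ge\tfrac12$ to $f$ by (R2) or (R3), and $\tfrac32+\tfrac12\ge 2$ covers the two (R5) payments --- but as written the case is missing.

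Two smaller points. First, in the $4$-vertex subcase you lean on the claim that one of the two adjacent faces ``does fire'' as a sponsor on the side of the $4$-vertex's high off-$f$ neighbor; this is not justified in general (the other endpoint of the shared edge need not have its high neighbor on that same face), and it is also unnecessary: the single $\tfrac12$ from (R1) already gives $\tfrac32+\tfrac12=2$, which exactly meets the $2$ owed by (R5), and nonnegativity is all the claim requires. Second, when $v_4$ is also high, $f$ ``becomes a sponsor twice'' only if $v_3$ and $v_5$ are low enough for (R6)--(R8) to apply; if not, $f$ simply pays $1$ to each $2$-vertex and the $3$ from $v_1,v_4$ still suffices, so the arithmetic survives but the statement should be conditioned.
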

\begin{proof}
Assume $f$ sends charge by (R6)--(R8), which implies that $f$ has at least two high vertices. 
If (R6)--(R8) applies to $f$ twice, then $\mu^*(f)\geq 2\cdot{3\over 2}-2\cdot{3\over 2}=0$.
If (R6)--(R8) applies to $f$ once, then there is at most one independent instance of (R5).
Thus, $\mu^*(f)\geq 2\cdot{3\over 2}-{3\over 2}-1>0$.

Now assume $f$ does not send charge by (R6)--(R8).
If $f$ has three $2$-vertices, then $f$ must have at least two high vertices by Lemma~\ref{no-22}.
Thus, $\mu^*(f)\geq 2\cdot{3\over 2}-3\cdot 1=0$.
If $f$ has one $2$-vertex, then $f$ must have at least one high vertex by Lemma~\ref{no-22}.
Thus, $\mu^*(f)\geq {3\over 2}-1>0$.

Assume $f$ has exactly two $2$-vertices. 
If $f$ has two $2$-vertices and at least two high vertices, then, $\mu^*(f)\geq 2\cdot{3\over 2}-2\cdot 1>0$.
Since $f$ has a $2$-vertex, $f$ must have one high vertex $v$. 
Without loss of generality, assume $v,u_1,u_2,u_3,u_4,u_5$ are vertices of $f$ in cyclic order so that $u_1,u_5$ are $2$-vertices. 
If $u_2, u_3, u_4$ are not all $3$-vertices, then $\mu^*(f)\geq {3\over 2}+{1\over 2}-2\cdot 1=0$.
Otherwise, $d(u_2)=d(u_3)=d(u_4)=3$. 
The $f$-external neighbors of $u_2,u_3,u_4$ must be high by Lemma~\ref{vx-degree}, and therefore $f$ will receive charge at least $1$ by (R6).
Thus, $\mu^*(f)\geq {3\over 2}+1-2\cdot 1>0$.

The only remaining case is when $f$ has no $2$-vertices, which implies that no rules from (R5)--(R8) apply and so $f$ does not send any charge. 
Thus, $\mu^*(f)\ge \mu(f)=6-6=0$.
\end{proof}

\begin{claim}
Each $5$-face $f$ has nonnegative final charge.
\end{claim}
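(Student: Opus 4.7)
The plan is to proceed by case analysis on $k$, the number of $2$-vertices incident to $f$, which is at most $2$ by Lemma~\ref{no-22}. The initial face charge is $\mu(f) = -1$, so the goal is to show that the net incoming contribution to $f$ is at least $+1$. The cases $k = 0$ and $k = 1$ are the easier ones. For $k = 0$, rules (R5), (R7), and (R8) cannot apply to $f$, so only (R6) fires; a direct check on the $5$-cycle shows that (R6) fires at most once, and when it does the two high endpoints supply $\geq 3$ via (R4). In the extreme subcase where every vertex of $f$ is a $3$-vertex, Lemma~\ref{vx-degree} forces every external neighbor of such a vertex to be high, so every adjacent face of $f$ is a $(3,3)$-sponsor. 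For $k = 1$, the $2$-vertex has a high neighbor on $f$ by Lemma~\ref{vx-degree}, contributing $\tfrac{3}{2}$ via (R4) and offsetting the $1$ spent via (R5); the remaining deficit is covered by the other three vertices on $f$ and, when needed, by (R6)-sponsorships from adjacent faces whose applicability again follows from Lemma~\ref{vx-degree}.

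The hard case is $k = 2$. Label the cyclic order of $f$ as $v_1,\ldots,v_5$ with $v_1, v_3$ the two $2$-vertices, which are non-adjacent by Lemma~\ref{no-22}. Applying Lemma~\ref{vx-degree} to $v_1$ and $v_3$ forces either $v_2$ to be high, or both $v_4$ and $v_5$ to be high. In the second scenario no $4$-consecutive (high, low, low, high) pattern fits on $f$, so $f$ sponsors nothing; the two high vertices contribute $\geq 3$ via (R4), enough to pay for both $2$-vertices via (R5) with nonnegative residue.

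In the remaining subcase, $v_2$ is high, and I split further by the unordered pair $\{d(v_4), d(v_5)\}$ and by the types of the two faces of $G$ that share edges with $f$ at $v_1$ and $v_3$. Up to symmetry this exhibits $f$ as one of: special, $X_1$-, $X_2$-, $Y_1$-, $Y_2$-, terrible, or an unclassified residual type. For each classification I tally (i) the charge from $v_2$ via (R4), which is $2$ precisely when $f$ is bad (i.e., $Y_1$ or $Y_2$) and $\tfrac{3}{2}$ otherwise, (ii) charges from $v_4$ and $v_5$ via (R1)--(R3), and (iii) sponsor charges received via (R6)--(R8), against (iv) the $2$ sent via (R5) and the net $\tfrac{3}{2}$ per sponsor instance sent by $f$, recalling from Section~\ref{rules} that $X_1$-faces are exempt from (R7).

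The bad- and terrible-face definitions are tuned exactly to close the ledger. For example, in a $Y_2$-face, Lemma~\ref{vx-degree} forces the face opposite the edge $v_4v_5$ to be a $(3,3)$-sponsor contributing $1$ via (R6), which together with the $2$ that $v_2$ sends as a bad-face high vertex yields exactly $0$; in a $Y_1$-face the $X_2$-face dictated on one side is a $(2,4)$-sponsor contributing $\tfrac{1}{2}$ via (R8A), which combined with $\tfrac{1}{2}$ from the $4$-vertex via (R1) and $2$ from $v_2$ again yields $0$; and in the unclassified residual subtype when $d(v_4) = d(v_5) = 3$, failing the $Y_2$ condition means at least one adjacent $2$-vertex face is not $X_1$, so (R7) sends $\tfrac{1}{2}$ to $f$, exactly compensating for the loss of the bad-face bonus from $v_2$. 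The main obstacle is this last subcase: for each of the seven classifications one must verify the local vertex contributions and trace which sponsor rules fire across each of the five edges of $f$, with the structural conditions in the $Y_1$-, $Y_2$-, and terrible-face definitions supplying exactly the sponsor charge needed whenever the bad-face bonus alone is insufficient.
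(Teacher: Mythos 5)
Your overall strategy is the same as the paper's: a case analysis on the local structure of the $5$-face, driven by Lemmas~\ref{vx-degree} and~\ref{no-22} and the special/$X$/$Y$/bad face classification, with the same charge tallies in the cases you work out (your $Y_1$, $Y_2$, and residual $(3,3)$ ledgers match the paper's exactly). The only organizational difference is that you split first by the number of incident $2$-vertices and fold the sponsorship question into each branch, whereas the paper first disposes of all faces that sponsor another face and then splits the non-sponsors by the number of $2$-vertices; this is cosmetic.

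However, there is a concrete gap in your $k=2$ branch. Your dichotomy is ``$v_2$ high'' versus ``$v_4,v_5$ both high,'' and in the first scenario you split by $\{d(v_4),d(v_5)\}$; but when exactly one of $v_4,v_5$ is also high, i.e.\ the degree pattern is $2,12^+,2,12^+,3$ or $2,12^+,2,12^+,4$, the face $f$ can itself be a $(3,2)$- or $(4,2)$-sponsor across the edge joining the low vertex to a $2$-vertex (precisely when the $X_1$-/$X_2$-conditions fail, so the exemption in (R7)/(R8A) does not apply). Your stated bookkeeping then gives income $2\cdot\tfrac32$ (plus $\tfrac12$ from a $4$-vertex in the second pattern) against one independent instance of (R5) plus a net $\tfrac32$ for the sponsor instance, which comes out to $-\tfrac12$, not $0$. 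The missing idea is the paper's reciprocity step: the very hypothesis that $f$ is not an $X_1$- (resp.\ $X_2$-) face forces the relevant $f$-external neighbor of the $3$-vertex (resp.\ $4$-vertex) to be high, which in turn makes the sponsored face $f'$ a $(2,3)$- or $(2,4)$-sponsor of $f$ across the same edge, so $f'$ returns at least $\tfrac12$ to $f$ by (R7) or (R8), closing the ledger at $0$. None of your three worked examples touches this two-high-vertex sponsor configuration, and your generic ``tally incoming sponsor charges'' framework does not by itself identify that this particular incoming charge exists and is indispensable; you would need to add this argument (and the analogous one for (R8B)) to complete the proof.
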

\begin{proof}
Let $v_1, \ldots, v_5$ be the vertices of $f$ in some cyclic order. Note that $\mu(f)=-1$.

Assume $f$ sends charge to another face by (R6)--(R8), which implies that $f$ has two high vertices; without loss of generality assume $v_1$ and $v_4$ are the high vertices, which each sends charge at least ${3\over 2}$ to $f$ by (R4).
Note that $f$ is not an $X_1$-face, which is not a sponsor of another face by definition.
If $v_5$ is not a $2$-vertex, then $\mu^*(f)\geq 2\cdot {3\over 2}-1-{3\over 2}>0$, since the net charge $f$ spends by (R6)--(R8) is at most $3\over 2$.
Therefore we may assume $v_5$ is a $2$-vertex. 
If $f$ sends charge to another face by (R6) or (R8A), then $\mu^*(f)\geq 2\cdot {3\over 2}-1-1-1=0$.

If $f$ sends charge to another face $f'$ by (R7), then $f$ must be either a $(2, 3)$-sponsor or a $(3, 2)$-sponsor.
Without loss of generality assume $v_2$ is the $3$-vertex on $f$ and let $u$ be the $f$-external neighbor of $v_2$.
Since $f$ is not an $X_1$-face, $u$ must be high.
This implies that $f'$ cannot be an $X_1$-face since $v_1$ is high, so $f'$ will also send charge ${1\over 2}$ to $f$ as well.
Thus, $\mu^*(f)\geq 2\cdot {3\over 2}-1-1-1-{1\over 2}+{1\over 2}=0$. 

If $f$ sends charge to another face $f'$ by (R8B), then $f$ is not an $X_2$-face.
Without loss of generality assume $v_2$ is the $4$-vertex on $f$ and let $u$ be the $f$-external neighbor of $v_2$ on $f'$. 
Now $u$ must be a high vertex otherwise $f$ becomes an $X_2$-face, which implies that $f'$ is either a $(2, 4)$-sponsor or a $(4, 2)$-sponsor of $f$ as well.
Thus, $\mu^*(f)\geq 2\cdot {3\over 2}+{1\over 2}-1-1-1-1+{1\over 2}=0$.

\medskip

Now assume $f$ is not a sponsor of another face. 

Assume $f$ is incident to no $2$-vertices. 
If $f$ is incident to a high vertex, then $\mu^*(f)\geq {3\over 2}-1>0$.
If $f$ is incident to a $5^+$-vertex and no high vertex, then $\mu^*(f)\geq 1-1=0$.
If $f$ is incident to at least two $4$-vertices, then $\mu^*(f)\geq 2\cdot{1\over 2}-1=0$.
Otherwise, $f$ has two consecutive $3$-vertices, and their $f$-external neighbors must be both high. 
Thus, $\mu^*(f)\geq -1+1=0$, since $f$ will receive charge $1$ by (R6).

It is easy to see that the number of $2$-vertices on $f$ is at most $2$.
Assume $f$ is incident to exactly one $2$-vertex $v_2$ and without loss of generality assume $v_1$ is a high vertex, which exists by Lemma~\ref{vx-degree}.
If $f$ is incident to two high vertices, then $\mu^*(f)\geq 2\cdot{3\over 2}-1-1>0$.
Otherwise $f$ is incident to exactly one high vertex.
If either $v_3$ or $v_4$ is a $4^+$-vertex, then $\mu^*(f)\geq {3\over 2}+{1\over 2}-1-1=0$.
If both $v_3$ and $v_4$ are $3$-vertices, then their $f$-external neighbors must be both high.
Thus, $\mu^*(f)\geq {3\over 2}-1-1+1>0$, since $f$ will receive charge $1$ by (R6).

Now assume $f$ is incident to exactly two $2$-vertices. 
If $f$ is also incident to at least two high vertices, then $\mu^*(f)\geq 2\cdot{3\over 2}-1-2\cdot 1=0$.
Since a $2$-vertex must be adjacent to a high vertex by Lemma~\ref{vx-degree}, we may assume that $v_1$ and $v_3$ are the $2$-vertices and $v_2$ is the only high vertex on $f$.
If either $v_4$ or $v_5$ is a $6^+$-vertex, then $\mu^*(f)\geq {3\over 2}+{3\over 2}-1-2\cdot 1=0$.
We may assume $d(v_4)\leq d(v_5)$.
If $v_5$ is a $5$-vertex and $v_4$ is a $4^+$-vertex, then $\mu^*(f)\geq {3\over 2}+1+{1\over 2}-1-2\cdot 1=0$.
If $v_5$ is a $5$-vertex and $v_4$ is a $3$-vertex,  then $f$ is a special face and $\mu^*(f)\geq {3\over 2}+{3\over 2}-1-2\cdot 1=0$, since $v_5$ now sends charge ${3\over 2}$ to $f$ by (R2).
If $v_5$ and $v_4$ are $4$-vertices, then $\mu^*(f)\geq {3\over 2}+2\cdot{1\over 2}+{1\over 2}-1-2\cdot 1=0$, since at least one adjacent face to $f$ will be a sponsor for $f$.

If $v_5$ is a $4$-vertex and $v_4$ is a $3$-vertex, then the $f$-external neighbor of $v_4$ must be high by Lemma~\ref{vx-degree}.
If $f$ is a bad face, then $\mu^*(f)\geq 2+{1\over 2}-1-2\cdot 1+{1\over 2}=0$, since $v_2$ now sends charge $2$ to $f$ by (R4) and $f$ receives charge ${1\over 2}$ by (R8A).
If $f$ is not a bad face, then either the face incident to $v_1$ that is not $f$ is not an $X_2$-face or the face incident to $v_3$ that is not $f$ is not an $X_1$-face. 
If the $f$-external neighbor of $v_5$ that is on the same face as the edge $v_4v_5$ is high, then $\mu^*(f)\geq {3\over 2}+{1\over 2}-1-2\cdot 1+1=0$, since $f$ will receive charge $1$ by (R6).
Otherwise, either $\mu^*(f)\geq {3\over 2}+{1\over 2}-1-2\cdot 1+2\cdot{1\over 2}=0$, since $f$ will receive charge ${1\over 2}$, ${1\over 2}$ by (R7), (R8A), respectively, or $\mu^*(f)\geq {3\over 2}+{1\over 2}-1-2\cdot 1+1=0$, since $f$ will receive charge $1$ by (R8B).
See Figure~\ref{fig-helper}.

\begin{figure}[h]
	\begin{center}
	\includegraphics[scale=0.8]{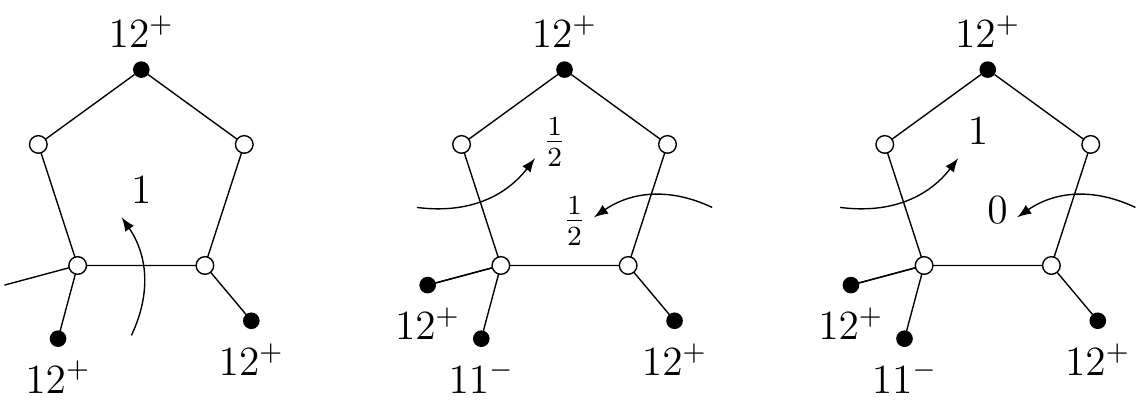}
	\caption{When $d(v_4)=3$ and $d(v_5)=4$}
	\label{fig-helper}
	\end{center}
\end{figure}

If $v_5$ and $v_4$ are both $3$-vertices, then the $f$-external neighbors of $v_5$ and $v_4$ must be high by Lemma~\ref{vx-degree}. 
If $f$ is a bad face, then $\mu^*(f)\geq 2-1-2\cdot 1+{1}=0$, since $v_2$ now sends charge $2$ to $f$ by (R4) and $f$ will receive charge $1$ by (R6).
If $f$ is not a bad face, then the face incident to either $v_1$ or $v_3$ that is not $f$ is not an $X_1$-face. 
Thus, $\mu^*(f)\geq {3\over 2}-1-2\cdot 1+1+{1\over 2}=0$, since $f$ will receive charge $1$, ${1\over 2}$ by (R6), (R7), respectively.
\end{proof}

\section{Proofs of Theorems}\label{section-proof}

We finish the paper by proving Theorem~\ref{thm-gen}, which implies Theorem~\ref{thm-main} and Corollary~\ref{cor-planar}. 
Note that the lemmas used in Section~\ref{section-lemmas} are specifically for $\gamma\leq 1$. 
However, in order to improve the readability of the paper, we did not explicitly rewrite all the lemmas for when $\gamma\geq 2$ as the ideas are identical and adds no extra value. 
We restate the generalized version of Lemma~\ref{vx-degree} and Lemma~\ref{no-22} in order to prove Lemma~\ref{vx-high-general}, which is used in the proof of Theorem~\ref{thm-gen}.

\begin{lemma}\label{vx-degree-gen}
Let $H$ be a graph with girth at least $5$ that is not $(1,t)$-colorable but every proper subgraph of $H$ is $(1, t)$-colorable.
\begin{enumerate}[$(i)$]
\item If $v$ is a $(t+1)^-$-vertex of $H$, then $v$ is adjacent to at least one $(t+2)^+$-vertex.
\item There are no $2$-vertices adjacent to each other in $H$.
\item The minimum degree of $H$ is at least $2$.
\end{enumerate}
\end{lemma}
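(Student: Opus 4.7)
My plan is to prove parts $(iii)$, $(ii)$, and $(i)$ in that order, each by a short coloring-extension argument that mirrors the proofs of Lemmas~\ref{vx-degree} and \ref{no-22} almost verbatim, with the constants $10$ and $11$ replaced by $t$ and $t+1$. Throughout I would use that $H$ is a minimum non-$(1,t)$-colorable graph, so deleting any single vertex yields a graph admitting a $(1,t)$-coloring.

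For $(iii)$, if $v$ has degree at most $1$, I would take a $(1,t)$-coloring $\varphi$ of $H-v$ and extend by assigning $v$ a color in $\{1,t\}$ distinct from that of its unique neighbor (or any color if $v$ is isolated). Since $v$ ends up with no same-color neighbor, the extension is trivially valid, contradicting the choice of $H$. For $(ii)$, I would mimic Lemma~\ref{no-22}: remove two adjacent $2$-vertices $u,v$ with $N(u)=\{v,u'\}$ and $N(v)=\{u,v'\}$, fix a $(1,t)$-coloring $\varphi$ of $H\setminus\{u,v\}$, and split on whether $\varphi(u')$ equals $\varphi(v')$; in both sub-cases the constraints at $u$, $v$, $u'$, $v'$ are loose enough to let one choose $\varphi(u),\varphi(v)\in\{1,t\}$ yielding a $(1,t)$-coloring of $H$.

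For $(i)$, I would follow Lemma~\ref{vx-degree}: assume every neighbor of $v$ has degree at most $t+1$, take a $(1,t)$-coloring $\varphi$ of $H-v$, and first observe that if no neighbor of $v$ is colored $1$ then coloring $v$ by $1$ finishes the extension; otherwise, the degree bound $d(u)\le t+1$ on each $t$-saturated neighbor $u$ of $v$ forces $u$ to have at most one color-$1$ neighbor, so $u$ can be recolored by $1$, after which $v$ has no $t$-saturated neighbor and at most $t$ color-$t$ neighbors, and setting $\varphi(v)=t$ completes a $(1,t)$-coloring of $H$.

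The main obstacle I anticipate is precisely the justification of the simultaneous recoloring in $(i)$: it must be verified that switching the $t$-saturated neighbors of $v$ to color $1$ does not over-saturate any color-$1$ vertex adjacent to those neighbors. I expect this to be handled by the same localized check used in Lemma~\ref{vx-degree}, possibly after first choosing an initial $(1,t)$-coloring of $H-v$ that minimizes the number of $t$-saturated neighbors of $v$; the remainder of the argument is routine and uses only the degree hypothesis, not the girth condition per se.
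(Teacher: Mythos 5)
Your proposal follows exactly the paper's (implicit) proof: the paper gives no separate argument for this lemma, noting only that the proofs of Lemmas~\ref{vx-degree} and~\ref{no-22} and the minimum-degree remark carry over with $10, 11$ replaced by $t, t+1$, which is precisely what you do. The one obstacle you leave open in part $(i)$ is already closed by the count you invoke: a $t$-saturated neighbor $u$ of $v$ has at most $t$ neighbors in $H-v$ and all of them are colored $t$, so $u$ has \emph{no} color-$1$ neighbor (not merely ``at most one''), hence recoloring $u$ with $1$ cannot over-saturate any color-$1$ vertex, and no minimization over initial colorings is needed.
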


\begin{lemma}\label{vx-high-general}
If $H$ is a graph with girth at least $5$ that is not $(1,t)$-colorable but every proper subgraph of $H$ is $(1, t)$-colorable, then $H$ has at least three $(t+2)^+$-vertices. 
\end{lemma}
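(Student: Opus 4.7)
The plan is to proceed by contradiction: assume $H$ has at most two $(t+2)^+$-vertices and derive a contradiction in each of the cases where the number of high vertices is $0$, $1$, or $2$.

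The cases of $0$ and $1$ high vertex are quickly dispatched using Lemma~\ref{vx-degree-gen}. With no $(t+2)^+$-vertex, every vertex would be a $(t+1)^-$-vertex and by $(i)$ would need a $(t+2)^+$-neighbor, an immediate contradiction (noting that $H$ must have at least one edge, since otherwise it is trivially $(1,t)$-colorable). With a single $(t+2)^+$-vertex $v$, $(i)$ forces $V(H)=\{v\}\cup N(v)$, the girth-$5$ condition makes $N(v)$ independent, and $(iii)$ forces every $u\in N(v)$ to have a second neighbor, which would have to lie inside the independent set $N(v)$, a contradiction.

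The main case is when $H$ has exactly two high vertices $v$ and $w$, which I split on whether $v\sim w$. When $v\sim w$, the girth-$5$ condition forbids any common neighbor of $v,w$ and any $4$-cycle $v$-$a$-$b$-$w$-$v$, so letting $A=N(v)\setminus\{w\}$ and $B=N(w)\setminus\{v\}$, both sets are independent with no $A$-$B$ edges; every vertex of $A\cup B$ then has degree $1$, contradicting $(iii)$, while the alternative $A=B=\emptyset$ forces $d(v)=1$, contradicting that $v$ is high. When $v\not\sim w$, the girth forces $|N(v)\cap N(w)|\le 1$, makes $A:=N(v)\setminus N(w)$ and $B:=N(w)\setminus N(v)$ independent, and forces the possible common neighbor $c$ to satisfy $N(c)=\{v,w\}$; hence the neighbors of any $u\in A$ lie in $\{v\}\cup B$, and symmetrically for $B$. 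With this structure in hand, I exhibit a $(1,t)$-coloring of $H$ directly: color $v$ and $w$ with the color of maximum degree~$1$ and every other vertex with the color of maximum degree~$t$. Then $v$ and $w$ each have color-$1$-degree $0$; the vertex $c$, if present, has color-$t$-degree $0$ since its only neighbors are $v$ and $w$; and any $u\in A$ (and symmetrically in $B$) has color-$t$-degree $|N(u)\cap B|\le d(u)-1\le t$ because $v$ is a color-$1$ neighbor of $u$ and $d(u)\le t+1$. This $(1,t)$-coloring contradicts the hypothesis.

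The main obstacle is the sub-case where $v$ and $w$ are non-adjacent: trying to extend a $(1,t)$-coloring of $H-v$ is unwieldy because $v$'s degree is unbounded in terms of the hypothesis, so the cleaner route is to first pin down the rigid structure forced by the girth-$5$ condition on $A$, $B$, and $C:=N(v)\cap N(w)$, and only then exhibit the explicit $2$-coloring.
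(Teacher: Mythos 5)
Your proof is correct, but it takes a genuinely different route from the paper's. The paper argues locally: since $H$ is not a tree it contains a cycle of length at least $5$; assuming at most two high vertices, some edge $uv$ of that cycle has both endpoints of degree at most $t+1$; Lemma~\ref{vx-degree-gen}$(i)$ then supplies high neighbors $u'\neq v'$ (distinct by girth), and a third neighbor $z$ of $u$ (which exists because $u$ and $v$ cannot both be $2$-vertices) is either itself high or has a high neighbor $z'$, which the girth condition forces to be distinct from $u'$ and $v'$ --- yielding three high vertices without ever producing a coloring. You instead run an exhaustive case analysis on the number of high vertices ($0$, $1$, or $2$), and in the hardest case ($v\not\sim w$) you pin down the entire global structure of $H$ (a near-bipartite graph on $\{v,w\}\cup A\cup B\cup\{c\}$ with $N(u)\subseteq\{v\}\cup B$ for $u\in A$, etc.) and then exhibit an explicit $(1,t)$-coloring contradicting the hypothesis; the key inequality $|N(u)\cap B|\le d(u)-1\le t$ is exactly right, and the degenerate sub-cases ($A=B=\emptyset$, the adjacent case collapsing to a double star) are handled correctly via $(iii)$ and the fact that a $(t+2)^+$-vertex has degree at least $2$. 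The paper's argument is shorter and needs no coloring at all in this lemma, which keeps it uniform with the rest of Section~\ref{section-lemmas}; your argument is longer but more informative, since it classifies the structure of any minimal non-$(1,t)$-colorable graph of girth at least $5$ with few high vertices and shows such graphs are in fact colorable, rather than merely locating a third high vertex. Both establish the lemma in full.
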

\begin{proof}
Since $H$ cannot be a tree, which is $(0, 0)$-colorable, there must exist some cycle $C$ in $H$. 
Since $H$ has girth at least $5$, $C$ has at least five vertices. 
If $H$ has at most two $(t+2)^+$-vertices, then there exists an edge $uv$ on $C$ where $d(u), d(v)\leq t+1$. 

By Lemma~\ref{vx-degree-gen} $(i)$, each of $u, v$ has a neighbor $u'$, $v'$, respectively, that is a $(t+2)^+$-vertex.
Note that $u'\neq v'$ since otherwise $H$ has a cycle $u, v, u'=v'$ of length $3$.
Since the minimum degree of $H$ is at least $2$ and two $2$-vertices cannot be adjacent by Lemma~\ref{vx-degree-gen} $(iii)$, $(ii)$, we may assume that $u$ has a neighbor $z$ that is neither $v$ nor $u'$. 
If $z$ is a $(t+2)^+$-vertex, then $u', v', z$ are three $(t+2)^+$-vertices of $H$.

If $z$ is not a $(t+2)^+$-vertex, then $z$ must have a neighbor $z'$ of degree at least $t+2$ by Lemma~\ref{vx-degree-gen} $(i)$.
Note that $z'\not\in\{u', v'\}$ since otherwise $H$ has a cycle of length at most $4$. 
Hence, $H$ has at least three $(t+2)^+$-vertices $u', v', z'$.
\end{proof}

\newtheorem*{thm-general}{Theorem \ref{thm-gen}}

\begin{thm-general}
Given a surface $S$ of Euler genus $\gamma$, every graph with girth at least $5$ that is embeddable on $S$ is $\big(1, K(\gamma)\big)$-colorable where $K(\gamma)=\max\{10, 4\gamma+3\}$.
\end{thm-general}
\begin{proof}
Suppose a counterexample to the theorem exists, and consider the minimum one.

If $S$ is the plane or the projective plane, then $\gamma\leq 1$ and so $K(\gamma)=10$.
The initial charge sum is less than $0$, but by claims in Subsection~\ref{claims}, the sum of the final charge is nonnegative.
Thus, we conclude that there is no such counterexample. 

If $S$ is neither the plane nor the projective plane, then $K(\gamma)=4\gamma+3$ and let $k(v, \gamma)=\min\left\{\lfloor{d(v)\over 3}\rfloor, d(v)-K(\gamma)-2\right\}$. 
By Lemma~\ref{bad-faces-num} and (R4) of the discharging procedure in Section~\ref{section-discharging}, for a $(K(\gamma)+2)^+$-vertex $v$,
$\mu^*(v)
\geq 2d(v)-6-2k(v, \gamma)-{3\over 2}\left(d(v)-k(v, \gamma)\right)
={1\over 2}\left(d(v)-k(v,\gamma)\right)-6$.
If $d(v)-\lfloor{d(v)\over 3}\rfloor\leq K(\gamma)+2$, then $k(v, \gamma)=d(v)-K(\gamma)-2$, so $\mu^*(v)\geq{K(\gamma)+2\over 2}-6$.
Otherwise, $d(v)-\lfloor{d(v)\over 3}\rfloor> K(\gamma)+2$, and $k(v, \gamma)=\lfloor{d(v)\over 3}\rfloor$, so $\mu^*(v)\geq{1\over 2}(d(v)-\lfloor{d(v)\over 3}\rfloor)-6> {K(\gamma)+2\over 2}-6$.
Therefore, $\mu^*(v)\geq {K(\gamma)\over 2}-5=2\gamma-3.5$.

Note that Lemma~\ref{vx-high-general} guarantees the existence of three $(K(\gamma)+2)^+$-vertices.
Thus, via the discharging procedure explained in Section~\ref{section-discharging} and the claims in Subsection~\ref{claims}, the final charge sum is at least $3(2\gamma-3.5)$, which is strictly greater than the initial charge sum, which is $6\gamma-12$.
This is a contradiction, and therefore there is no counterexample to the theorem. 
\end{proof}

\section{Acknowledgments}

The authors thank Andr\'e Raspaud for introducing the problem to the second author when he was a student at the University of Illinois at Urbana--Champaign.

\bibliographystyle{plain}
\bibliography{PlanarGirth5_110color}

%

\end{document}